\DeclareSymbolFont{cyrletters}{OT2}{wncyr}{m}{n}
\DeclareMathSymbol{\Sha}{\mathalpha}{cyrletters}{"58}
\newcommand{\bQ}{{\mathbb{Q}}}
\newcommand{\bR}{{\mathbb{R}}}
\newcommand{\bZ}{{\mathbb{Z}}}
  \newcommand{\A}{{\mathcal{A}}}
\newcommand{\Li}{\operatorname{Li}}
\newcommand{\Art}{\operatorname{Art}}
\newcommand{\Disc}{\operatorname{Disc}}
\newcommand{\Stab}{\operatorname{Stab}}
\newcommand{\ep}{\varepsilon}
\newcommand{\upchi}{{\raise.35ex\hbox{$\chi$}}}
\newcommand{\SL}{\operatorname{SL}}
\newcommand{\Kronecker}[2]{\genfrac{(}{)}{}{}{#1}{#2}}
\newcommand{\Hur}{\mathrm{Hur}}
\newtheorem{theorem}{Theorem}[section]
\newtheorem{corollary}[theorem]{Corollary}
\newtheorem{proposition}[theorem]{Proposition}
\newtheorem{lemma}[theorem]{Lemma}
\theoremstyle{definition}
\newtheorem{definition}[theorem]{Definition}
\newtheorem{remark}[theorem]{Remark}
\numberwithin{equation}{section}
\begin{document}

\title{On the number of binary quadratic forms having discriminant $1-4p$, $p$ prime}

\author{Alison Beth Miller}
\address{Mathematical Reviews \\
535 W. William St, Suite 210\\
Ann Arbor, MI 48103 }
\email{alimil@umich.edu}

\author{Stanley Yao Xiao}
\address{Department of Mathematics and Statistics \\
University of Northern British Columbia \\
3333 University Way \\
Prince George, British Columbia, Canada V2N 4Z9 }
\email{syxiao@math.toronto.edu}
\indent


\begin{abstract} 
In this paper we obtain an asymptotic formula for the number of $\SL_2(\bZ)$-equivalence classes of positive definite binary quadratic forms over $\bZ$ having bounded discriminant $\Delta = 1-4p$, with $p$ a prime.  We also give a random Euler product model for the distribution of Hurwitz class numbers, which is supported by our formula. 
\end{abstract}

\maketitle

\section{Introduction}
\label{Intro}

In \cite{Mil}, the first named author gave an account of a connection between simple $(4a+1)$-knots of genus $1$ with Alexander polynomial
\begin{equation} \label{Deltam} \Delta_m(t) = mt^2 - (2m-1)t + m, m \in \bZ,\end{equation}
and binary quadratic forms of discriminant $1-4m$.  In particular, there is a surjective map from such binary quadratic forms to the aforementioned knots, which is injective exactly when $m = \pm p$ is a prime or negative of a prime, and otherwise is expected to have large kernel. As a consequence, the first named author gave some heuristics
suggesting that knots with Alexander polynomial $\Delta_p(t)$ with $p$ prime should dominate the total count (up to isomorphism) of $(4a+1)$-knots of genus $1$ with Alexander polynomial $1-4m$ with $m \in [0, X]$ not necessarily prime.
Determining the size of the heuristically dominant term leads to the following question, which is purely about definite binary quadratic forms: \\ \\
\textbf{Question}: How many $\SL_2(\bZ)$-classes of integral, positive definite binary quadratic forms are there whose discriminant is of the form $1-4p$ with $p$ prime, and bounded by $X$? \\ 


Note that here the binary quadratic forms are allowed to be imprimitive.  For knots of genus 1, primitivity of the quadratic form associated to a knot is equivalent to cyclicity of the first homology group of the double branched cover. For higher genus knots there is an analogous condition for primitivity in terms of the structure of this first homology group, but it is less pleasant.  Although in the genus 1 case this topological condition is relatively nice, it is not particularly natural to restrict to such knots. \\

Put $H(D)$ for the total number of $\SL_2$-equivalence classes of positive definite binary quadratic forms of discriminant $D$. This is almost the same as the Hurwitz class number, except that we are not weighting the forms that have automorphisms, and it has the same asymptotics (see Lemma~\ref{H minus Hur}  and Corollary~\ref{difference with Hur}). 
Our question is then equivalent to: what is $\sum_{p \leq X} H(1 - 4p)$?\\


The previous paper \cite{Mil} conjectured that the answer should be asymptotic to $X^{3/2}/\log X$, based on the naive heuristic assumption that class numbers $H(1-4p)$ ranging over $p$ prime, have the same statistical behaviour as the class numbers $H(1-4m)$ where $m$ ranges over all positive integers. There, a sieve was applied to prove an upper bound of the form $O(X^{3/2}/\log X)$, without determining the constant explicitly.  In this paper we give an exact asymptotic for the counting problem. Our strongest version of this asymptotic, which has an arbitrary log power savings, can be found in Theorem~\ref{main result CNF}, but we give the following simpler version here:

\begin{theorem} \label{MT} For $X \ge 2$, we have the asymptotic formula
\[\sum_{p \leq X} H(1 - 4p) = C_{\Art} \cdot \frac{2 \pi}{9} \cdot \frac{X^{3/2}}{\log X} + O \left(\frac{X^{3/2}} {(\log X)^{2}} \right). \]
\end{theorem}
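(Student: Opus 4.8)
The plan is to express $H(1-4p)$ via the Dirichlet class number formula and then carry out a sum over primes. Writing $\Delta = 1-4p$, which is negative for $p \geq 1$, the quantity $H(\Delta)$ counts all $\SL_2(\bZ)$-classes of positive definite forms of discriminant $\Delta$, including imprimitive ones; such a form of discriminant $\Delta$ is $f^2$ times a primitive form of discriminant $\Delta/f^2$, so $H(1-4p) = \sum_{f^2 \mid (4p-1)} h(\tfrac{1-4p}{f^2})$ up to the usual adjustment at the two forms with extra automorphisms (which contributes $O(1)$ per prime and hence $O(X/\log X)$ overall, negligible). Because $1-4p$ is $\equiv 1 \pmod 4$ and $p$ is prime, $4p-1$ is squarefree away from small obstructions, so the $f=1$ term dominates and $H(1-4p) = h(1-4p) + O(1)$ for all but $O(X^{1/2})$ primes $p \leq X$; I will need to check that the exceptional $f>1$ contributions sum to $O(X^{3/2}/(\log X)^2)$, which follows since $\sum_{p \leq X} \sum_{f \geq 2,\ f^2 \mid 4p-1} h((1-4p)/f^2) \ll \sum_{f \geq 2} \sum_{p \leq X/f^2 + O(1)} (X/f^2)^{1/2} \log X \ll X^{3/2}\log X \sum_{f\geq 2} f^{-3}$, wait — that is too lossy, so instead I bound it by $\sum_{f \geq 2} X^{1/2} f^{-1} \cdot \pi(X)/f^2 \ll X^{3/2}/\log X \cdot \sum f^{-3}$, still not quite enough; the correct bound uses that for fixed $f$ the condition $f^2 \mid 4p-1$ restricts $p$ to a fixed residue class, giving $\ll X/(f^2 \log X)$ such primes each contributing $\ll X^{1/2}/f \cdot \log X$, total $\ll X^{3/2} \sum_{f \geq 2} f^{-3} = O(X^{3/2}/(\log X)^2)$ only if we also save a $\log$, so I will be slightly more careful and use a power-saving zero-free-region input. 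Having reduced to the primitive class number, Dirichlet's formula gives $h(1-4p) = \frac{\sqrt{4p-1}}{\pi} L(1, \chi_{\Delta})$ (with $\Delta$ a fundamental discriminant for such $p$, adjusting by the standard factor otherwise), so
\[
\sum_{p \leq X} H(1-4p) = \frac{1}{\pi} \sum_{p \leq X} \sqrt{4p-1}\, L(1,\chi_{1-4p}) + O\!\left(\frac{X^{3/2}}{(\log X)^2}\right).
\]

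Next I would expand $L(1,\chi_{1-4p}) = \sum_{n \geq 1} \frac{\chi_{1-4p}(n)}{n}$ and truncate the sum at $n \leq (\log X)^{A}$ for a suitable constant $A$, with tail controlled by the Pólya–Vinogradov bound or, better, by an average-over-$p$ large sieve estimate so that the tail contributes $O(X^{3/2}/(\log X)^2)$. For each fixed $n$, the symbol $\chi_{1-4p}(n) = \left(\frac{1-4p}{n}\right)$ (via Kronecker) is a fixed function of $p \bmod n$ by quadratic reciprocity, so
\[
\sum_{p \leq X} \sqrt{4p-1}\, \chi_{1-4p}(n) = \sum_{a \bmod n} \Legendre{1-4a}{n} \sum_{\substack{p \leq X \\ p \equiv a\, (n)}} \sqrt{4p-1}.
\]
By partial summation, $\sum_{p \leq X,\ p \equiv a\,(n)} \sqrt{4p-1} = \frac{2}{\varphi(n)} \cdot \frac{X^{3/2}}{\tfrac{3}{2}\log X} + \text{error}$ whenever $\gcd(a,n)=1$, using the prime number theorem in arithmetic progressions (Siegel–Walfisz suffices since $n$ is as small as a power of $\log X$), while the terms $\gcd(a,n)>1$ contribute negligibly. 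Summing the main terms over $a$ yields a factor $\frac{1}{\varphi(n)}\sum_{a \bmod n}^{*} \left(\frac{1-4a}{n}\right)$; this character sum is multiplicative in $n$, and on prime powers it evaluates to the local Euler factor of an Artin $L$-function at $s=1$ — this is exactly where the constant $C_{\Art}$ enters. Assembling $\sum_n \frac{1}{n}$ times these local factors reconstitutes $C_{\Art} = \prod_{\ell} (\text{local factor})$ as a convergent Euler product.

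Putting the pieces together, the main term is
\[
\frac{1}{\pi} \cdot \frac{2}{3} \cdot \frac{X^{3/2}}{\log X} \cdot \Bigl(\sum_{n \geq 1} \frac{1}{n \varphi(n)} \sum_{a \bmod n}^{*} \Legendre{1-4a}{n}\Bigr) = C_{\Art} \cdot \frac{2\pi}{9} \cdot \frac{X^{3/2}}{\log X},
\]
after identifying the bracketed Dirichlet series with $\tfrac{2\pi^2}{3}\,C_{\Art}$ (equivalently $\tfrac{\pi^2}{3} \cdot \tfrac{6}{\pi^2}$ times the Euler product, using $\sum 1/(n\varphi(n)) = \tfrac{315}{2\pi^4}\zeta(3)$-type identities only as a sanity check on convergence, not in the final form) — the precise bookkeeping of the rational constant $2\pi/9$ versus $1/\pi \cdot 2/3$ is a finite computation I would carry out by matching local factors at every prime, including the ramified primes $2$ and those dividing small values. \textbf{Main obstacle.} The genuinely delicate point is the uniformity in the truncation: I need $L(1,\chi_{1-4p})$ replaced by a short sum with total error $o(X^{3/2}/(\log X)^2)$ after summing over $p \leq X$, and simultaneously I need the Siegel–Walfisz error terms, summed over $n$ up to $(\log X)^A$ and over the $O(n)$ residue classes, to stay below this threshold — the tension is that Siegel–Walfisz only gives savings of $\exp(-c\sqrt{\log X})$ which beats any power of $\log X$, so this works, but the possible Siegel zero forces the constant $c$ and hence the implied constants to be ineffective unless one invokes the quadratic nature of $\chi_{1-4p}$ to rule out exceptional real zeros in this family; handling that cleanly (and controlling the contribution of $p$ for which $1-4p$ is not fundamental, i.e. divisible by an odd square) is where the real work lies.
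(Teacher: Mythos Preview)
Your overall framework is the paper's second proof (Section~4): class number formula, expand $L(1,\chi)$, truncate, Siegel--Walfisz on the short range, Euler product for the constant. That skeleton is right, but there is a genuine gap in how you handle the imprimitive forms.

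You attempt to show that $\sum_{p\le X}\sum_{f\ge 2,\ f^2\mid 4p-1} h\!\left(\tfrac{1-4p}{f^2}\right)$ is an error term. Your own calculation already shows it is $\ll (X^{3/2}/\log X)\sum_{f\ge 2} f^{-3}$, the same order as the main term, and your proposed fix (``power-saving zero-free-region input'') cannot help: these contributions are positive, not oscillatory. Concretely, a positive proportion of primes (e.g.\ $p\equiv 7\pmod 9$) have $9\mid 4p-1$, and for each such $p$ the extra term $h((1-4p)/9)\asymp p^{1/2}$ is not $O(1)$. In the paper's notation the content-$d$ piece satisfies
\[
Q_d(X)\ \sim\ \frac{\pi}{9}\,c(d)\,\Bigl(\prod_{\ell\text{ odd}}\tfrac{\ell^3-\ell^2-\ell-1}{\ell^3-\ell^2}\Bigr)\,\frac{X^{3/2}}{\log X},
\qquad c(d)=d^{-3}\prod_{\ell\mid d}\tfrac{\ell^3-1}{\ell^3-\ell^2-\ell-1},
\]
and it is only after summing over \emph{all} odd $d$ that the two Euler products telescope to $2C_{\Art}$. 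Keeping just $d=1$ (i.e.\ computing $\sum_p h(1-4p)$ as you do) gives the wrong leading constant, too small by the factor $\sum_d c(d)=\prod_{\ell\text{ odd}}\tfrac{\ell^3-\ell^2-\ell}{\ell^3-\ell^2-\ell-1}\approx 1.08$. So the imprimitive pieces must be carried through the whole analysis, not discarded; the paper does this by estimating each $T_d(X)=\sum_{p,\ d^2\mid 1-4p} L(1,\chi_{(1-4p)/d^2})$ separately.

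A secondary issue is the truncation. P\'olya--Vinogradov alone gives tail $\ll \sqrt{p}\log p/Y$ for each $p$, and after multiplying by $\sqrt{4p-1}$ and summing over $p\le X$ this is $\gg X^2/(\log X)^A$ for $Y=(\log X)^A$, far too large. The paper first truncates at $B=X^{1/2+\delta}$ by Siegel's partial-summation argument, then kills the range $(\log X)^\beta<n\le B$ dyadically via a bilinear Jacobi-symbol cancellation (the double oscillation Lemma~2.2). Your ``average-over-$p$ large sieve'' suggestion is in the right spirit---it would be doing the same job---but this step needs to be made explicit rather than left as a hope.
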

We note that Theorem \ref{MT} follows from the more precise Theorem \ref{main result CNF}; see (\ref{MTIBP}). \\

Here the constant $C_{\Art}$ is \emph{Artin's constant}, given by the Euler product
\[C_{\Art} = \prod_{\ell \ge 2} \left(1 - \frac{1}{\ell(\ell-1)} \right). \]
This shows that the heuristic assumption in \cite{Mil} is not quite right.  Comparing with the asymptotic \[{\sum_{m \le X} H(1-4m)} \sim \frac{2\pi}{9} X^{3/2}\] 
as $X \to \infty$, we see that the Hurwitz class numbers $H(1-4p)$ 
are smaller on average than the Hurwitz class numbers $H(1-4m)$ by a factor of Artin's constant $C_{\Art}$, even though the Hurwitz class numbers $H(-p)$ with $p \equiv 3 \pmod 4$ are known to have the same average behavior as $H(1-4m)$ \cite{Nagoshi}.  We explain this discrepancy in Section~\ref{section:randomEuler} in terms of random Euler products; this model could also be used to predict higher moments.   Although random Euler products are a standard method for modeling special values of $L$-functions, the Hurwitz class numbers $H(1-4m)$ are not special values of standard quadratic $L$-functions, but instead of ``Zagier $L$-functions'' as defined in \cite{Zagier}.  Thus the novelty of Section~\ref{section:randomEuler} lies in giving a random model for Zagier $L$-functions that can be used to study the distribution of Hurwitz, rather than regular, class numbers.
  \\


Artin's constant originally arose in the context of Artin's conjecture on primitive roots \cite{Hoo1}, namely predicting the proportion of primes $\ell$ for which a given non-square integer $a \ne - 1$ is a primitive root mod $\ell$ (this proportion is expected to be independent of $a$ provided that $a$ is not a perfect power). We do not know of a direct connection between Theorem \ref{MT} and Artin's conjecture. \\

Similar problems were considered by Friedlander and Iwaniec in \cite{FI hyperbolic}, for the case of discriminants of the form $-4p$, and by Nagoshi in \cite{Nagoshi} for discriminants of the form $-p \equiv 1 \pmod{4}$, with Nagoshi notably computing all moments.  Our proof, using the analytic class number formula, is inspired by these proofs.   However, in implementing this strategy we encounter additional complications:  where Friedlander--Iwaniec and Nagoshi obtain a single main term, we end up with infinitely many main terms, which we must sum to obtain the final answer.  These come from two sources: firstly, we are including non-primitive forms in our count, and secondly, because numbers of the form $1-4p$, unlike primes, are slightly more likely to be non-residues modulo a prime $\ell$ than they are to be residues. \\

We emphasize this point. While in previous works restricting to primes did not affect the average size of the class number, in our case it does, producing a factor of $C_{\Art}$ that does not appear in the total count of all binary quadratic forms.\\


As previously noted, the correspondence between high-dimensional knots and quadratic forms allows us to restate our results in topological terms, which we do in Section~\ref{subsec: knots}.  Since this correspondence goes by way of Seifert pairings, which are also used in classical low-dimensional knot theory, we obtain the following corollary in terms of objects of relevance to low-dimensional knot theorists. We use the notation $M^T$ to denote the transpose of a matrix $M$.

\begin{corollary}\label{cor: seifert}
The number of $S$-equivalence classes of $2 \times 2$ Seifert matrices $P$ with Alexander polynomial $\det (t P - P^T) = p t^2 + (1-2p) t + p$ for $p \in [1, X]$ prime is asymptotic to $C_{\Art} \frac{4\pi}{9} X^{3/2}/(\log X)$ as $X \to \infty$.   
\end{corollary}

\begin{remark}
    By a classical result of Gauss we have that $H(1-4p)$ is equal to 24 times the number of ways of writing $4p-1$ as a sum of three squares (provided that $p$ is odd and so $4p-1$ is 3 mod $4$).  That is, our result can also be interpreted as counting lattice points $(a, b, c)$ in a ball of radius $\sim 2 \sqrt{X}$ such that $(a^2 + b^2 + c^2-1)/4$ is a prime number.  This could provide another route to justifying our asymptotic heuristically, as the volume of this ball times the proportion of lattice points that we expect to survive sieving.  However, it is not clear how to use such sieves to prove an estimate with the strength of Theorem~\ref{MT}.
\end{remark}


\subsection{Outline of the proof}

Our proof starts from the usual analytic class number formula 
\[h(D) = \frac{1}{\pi} L(1, \chi_D) |D|^{1/2},\]
for negative discriminant $D$, where $h(D)$ denotes the class number of the quadratic field $\bQ(\sqrt{D})$ and $\chi_D = \Kronecker{D}{\cdot}$ is the primitive Dirichlet character associated to $\bQ(\sqrt{D})$. Here $\Kronecker{\cdot}{\cdot}$ is the Kronecker symbol, defined for odd  primes by 
\[\Kronecker{a}{p}  = \begin{cases} 1 & \text{if } a \text{ is a quadratic residue modulo }p \\ -1 & \text{if } a \text{ is a quadratic non-residue modulo } p \\ 0 & \text{if } p \mid  a, \end{cases}\]
 for $2$ by 
\[\Kronecker{a}{2}  = \begin{cases} 1 & \text{if } a \text{ is $\pm 1$ modulo }8 \\ -1 & \text{if } a \text{ is $\pm 3$ modulo } 8 \\ 0 & \text{if } 2 \mid  a, \end{cases}\]
and for $-1$ by
\[\Kronecker{a}{-1} = \begin{cases}
1 & \text{if }a > 0\\
-1 & \text{if }a < 0
\end{cases}
\]
and extended multiplicatively. It is known that both $\Kronecker{\cdot}{m}$ and $\Kronecker{m}{\cdot}$ for $m$ fixed are completely multiplicative functions. \\

This applies even when $D$ is not a fundamental discriminant, but the class number $h(D)$ only counts primitive binary quadratic forms.  Since we want to count all binary quadratic forms, including the imprimitive ones, our first step is to write our sum $Q(X) = \sum_{d \ge 1} Q_d(X)$,  where $Q_d(X)$ counts only the forms $ax^2 + bxy + cy^2$ with content $\gcd(a, b, c)= d$ and discriminant up to $X$ (in absolute value).
To estimate $Q_d(X)$, we first estimate the related quantity $T_d(X) = \sum_{\substack{p \le  X\\ d^2 \mid 1-4p}} L(1, \chi_{(1-4p)/d^2})$. We acknowledge that this notation is somewhat unpleasant, but it would be even more cumbersome to introduce auxiliary notation to alleviate the burden placed here. \\


This reduces our problem to bounding the following weighted bilinear character sum:
\[
T_d(X) = \sum_{\substack{p \le  X\\ d^2 \mid 1-4p}} L(1, \chi_{(1-4p)/d^2}) = \sum_{{\substack{p \le  X\\ d^2 \mid 1-4p}}} \sum_{n \ge 1} \frac{1}{n} \Kronecker{(1-4p)/d^2}{n}.
\]
  We are able to cut off the sums in the $L$-functions at $n \sim X^{1/2+\ep}$.  We then show that the range $(\log X)^B < n < X^{1/2 + \ep}$ yields an error term by dividing  into dyadic intervals and applying  bounds to the bilinear character sums over each interval.  Finally, for $n \le (\log X)^B$ we use Siegel--Walfisz to estimate the sum over $p$: this sum is generally nonzero, so we then have to combine our Siegel--Walfisz main terms into one big main term.


\subsection*{Acknowledgments}

The authors thank Manjul Bhargava for comments on an earlier draft of this paper.  We also thank the referee for helpful comments that improved the exposition of this paper as well as the quality of the results. 

\section{Motivation from and application to knot theory} 
\label{subsec: knots}

This line of research was originally motivated by a question in knot theory. We sketch the appropriate background   and state the implications of our results in terms of knots and knot invariants.  For more details, see Section 2 of \cite{Mil} for more details. \\

Roughly speaking, an $n$-knot is a ``nicely'' embedded copy of $S^n$ in $S^{n+2}$, up to topological equivalence. Here it matters that we keep track of the orientations both on $S^n$ and $S^{n+2}$. The most well-known case is that of $n=1$, where the classification of knots and study of their invariants is an extremely rich and active field.  In higher dimensions, the study of all knots only gets more complicated, but certain special families of knots are well understood, in particular the simple $n$-knots, which are those for which the first $\lfloor\frac{n}{2}\rfloor$ homotopy groups of the knot complement are ``as trivial as possible'' (explicitly, $\pi_1$ is isomorphic to $\bZ$ and $\pi_i$ is trivial for $2 \le i \le \frac{n}{2}$).\\

For $n = 5$ and $n \ge 7$, simple $n$-knots have been completely classified in terms of algebraic data: notably, this classification only depends on the dimension $n$ modulo $4$.  Simple knots have a fundamental invariant, the Alexander polynomial, and Bayer and Michel proved that for a squarefree polynomial $\Delta \in \bZ[t]$, there are only finitely many simple $n$-knots with Alexander polynomial $\Delta$.\\
 
The paper \cite{Mil} studied simple $(4a+1)$-knots of genus $1$, for fixed $a \ge 1$ (as noted above the classification does not depend on $a$).  These are exactly the simple $(4a+1)$-knots with Alexander polynomial $\Delta_m(t) = m t^2 + (1-2m) t + m$ for some nonzero integer $m$.  Using the algebraic classification of simple $(4a+1)$-knots, in \cite{Mil} the first named author showed that:

\begin{theorem}[\cite{Mil} Theorem 2.5 (vi), Corollary 2.13]
Simple $(4a+1)$-knots with Alexander polynomial $\Delta_m$ are in bijection with $\SL_2(\bZ[1/m])$-equivalence classes of binary quadratic forms over $\bZ[1/m]$ of discriminant $1-4m$. Furthermore, when $m=p$ is prime, $\SL_2(\bZ[1/p])$-equivalence classes of binary quadratic forms of discriminant $1-4m$ over $\bZ[1/m]$ are naturally identified with $\SL_2(\bZ)$-equivalence classes of binary quadratic forms  over $\bZ$ of discriminant $1-4p$.
\end{theorem}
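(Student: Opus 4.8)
The plan is to derive both assertions from the algebraic classification of simple odd-dimensional knots together with the classical dictionary relating binary quadratic forms to ideals of quadratic orders, keeping careful track of the equivalence relations. Since the discriminant $1-4m$ of $\Delta_m$ is nonzero, $\Delta_m$ is squarefree, so the classification of Levine and Kearton applies to simple $(4a+1)$-knots (here $4a+1=2q-1$ with $q=2a+1\ge 3$, as $a\ge 1$): such a knot is determined, up to the equivalence relevant to knots, by its Blanchfield pairing --- a nonsingular $(-1)^q$-Hermitian (here skew-Hermitian) form on the Alexander module $A$ of the infinite cyclic cover, where $A$ is a $\bZ[t^{\pm1}]$-torsion module with order ideal $(\Delta_m)$ --- and every such pairing is realized. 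The first step is to observe that for a genus-one knot $A$ is cyclic: a genus-one Seifert surface yields a $2\times2$ Seifert matrix $V$ with $\det(V-V^{T})=\pm1$ and $\det(tV-V^{T})\doteq\Delta_m(t)$, and a short computation shows the $1\times1$ minors of $tV-V^{T}$ generate the unit ideal, so its Smith normal form over $\bZ[t^{\pm1}]$ is $\operatorname{diag}(1,\Delta_m)$ and $A\cong R_m:=\bZ[t^{\pm1}]/(\Delta_m(t))$. Because $\Delta_m(t)=m(t-1)^2+t$ is palindromic with leading and constant term $m$, the involution $t\mapsto t^{-1}$ makes $R_m$ into a quadratic order, and $R_m=\bZ[1/m][\omega]$ with $\omega=\tfrac{1+\sqrt{1-4m}}2$ a root of $X^2-X+m$; thus $R_m$ is the order of discriminant $1-4m$ over $\bZ[1/m]$, with $t\mapsto t^{-1}$ its canonical conjugation.

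Under the identification $A\cong R_m$, the Blanchfield pairing becomes a nonsingular skew-Hermitian form on a rank-one $R_m$-module, i.e.\ on a fractional $R_m$-ideal $\fa$; after the usual normalizations this is the same data as a fractional $R_m$-ideal together with an orientation. Choosing a $\bZ[1/m]$-basis of $\fa$, the associated norm form is a binary quadratic form over $\bZ[1/m]$ of discriminant $\operatorname{disc}(R_m)=1-4m$, and the classical correspondence between oriented rank-one Hermitian modules over a quadratic order and binary quadratic forms of the corresponding discriminant (Gauss; in this generality Bayer-Fluckiger and others) sends isometry of oriented modules to proper --- i.e.\ $\SL_2(\bZ[1/m])$ --- equivalence of forms. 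Running this with $R=R_m$, $D=1-4m$ over the base ring $\bZ[1/m]$, and checking that the relevant knot/Seifert equivalence corresponds exactly to $R_m$-module isometry (this is where inverting $m$ is genuinely used), gives the first assertion; the only delicate bookkeeping is with orientations, which is precisely what forces $\SL_2$ rather than $\GL_2$.

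For the second assertion, let $m=p$ be prime, so $p\nmid1-4p$ and $R_p$ is the localization away from $p$ of the genuine $\bZ$-order $\O=\bZ[\tfrac{1+\sqrt{1-4p}}2]$ of discriminant $1-4p$; base change then gives a map from $\SL_2(\bZ)$-classes of integral forms of discriminant $1-4p$ to $\SL_2(\bZ[1/p])$-classes of such forms over $\bZ[1/p]$, and I would prove it is a bijection by stratifying by content. A form of discriminant $1-4p$ and content $d$ (necessarily $d$ odd with $d^2\mid1-4p$) corresponds to a primitive form of discriminant $D':=(1-4p)/d^2$, classified over $\bZ$ by $\operatorname{Cl}^{+}(\O_{D'})$ and over $\bZ[1/p]$ by $\operatorname{Cl}^{+}(\O_{D'}\otimes\bZ[1/p])$, so it suffices to show each localization map $\operatorname{Cl}^{+}(\O_{D'})\to\operatorname{Cl}^{+}(\O_{D'}\otimes\bZ[1/p])$ is an isomorphism. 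This map is always surjective, and since $p\nmid D'$ the prime $p$ is regular in $\O_{D'}$, so its kernel is generated by the classes of the primes of $\O_{D'}$ above $p$; thus the claim reduces to those primes being principal. Here the shape of the discriminant is decisive: the principal form of discriminant $1-4p$ is exactly $[1,1,p]=x^2+xy+py^2$, and it represents $p$ primitively (take $(x,y)=(0,1)$), so one prime of $\O$ above $p$ is principal, hence so is its conjugate; since $p\nmid d$, the primes of $\O_{D'}$ above $p$ are the images of these under extension of ideals, hence principal as well. Summing over the finitely many admissible $d$ yields the bijection, and with the first assertion this gives the second.

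The number theory in the last step is essentially the single observation that $[1,1,p]$ is the principal form of discriminant $1-4p$ and represents $p$; the real work, and the main obstacle, is the first reduction: invoking the Levine--Kearton classification in this dimension, verifying that the genus-one hypothesis makes the Alexander module cyclic over $\bZ[t^{\pm1}]$, and --- most delicately --- matching the knot-theoretic notion of equivalence precisely with proper $\SL_2(\bZ[1/m])$-equivalence of binary quadratic forms over $\bZ[1/m]$, getting both the orientations and the appearance of the ring $\bZ[1/m]$ exactly right.
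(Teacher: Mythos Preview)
The paper does not contain a proof of this statement: it is quoted verbatim as a result from \cite{Mil} (Theorem~2.5(vi) and Corollary~2.13 there), and is used here only as input to deduce the knot-theoretic corollaries of Theorem~\ref{MT}. So there is no ``paper's own proof'' to compare your proposal against.

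That said, your sketch is a reasonable outline of how such a result is established, and broadly matches the strategy one finds in \cite{Mil}: invoke the Levine--Kearton classification of simple $(4a+1)$-knots by Blanchfield pairings, identify the Alexander module in genus one with the quadratic $\bZ[1/m]$-order $\bZ[t^{\pm1}]/(\Delta_m)\cong\bZ[1/m][\tfrac{1+\sqrt{1-4m}}{2}]$, and then translate Hermitian modules over this order into $\SL_2(\bZ[1/m])$-classes of binary quadratic forms of discriminant $1-4m$. Your treatment of the second assertion---reducing to triviality of the primes above $p$ in each narrow class group $\Cl^{+}(\O_{D'})$ via the fact that the principal form $[1,1,p]$ represents $p$---is the expected argument and is correct as stated (the key point being $p\nmid 1-4p$, so $p$ is coprime to every conductor that can occur).

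The genuine gaps in your write-up are exactly the ones you flag yourself: the precise matching of knot-theoretic equivalence with proper equivalence of forms (orientations, and why $\bZ[1/m]$ rather than $\bZ$ is forced) is asserted rather than carried out. For a self-contained proof you would need to fill this in; for the purposes of the present paper, the citation to \cite{Mil} is what is intended.
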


Combining this with our Theorem~\ref{MT}, and including a factor of $2$ as we are allowing both positive and negative definite forms,

\begin{corollary}
The total number of simple $(4a+1)$-knots of genus $1$ with Alexander polynomial $p t^2 + (1-2p) t + p$ for $p \in [1, X]$ prime is $\sim C_{\Art} \frac{4\pi}{9} X^{3/2}/(\log X)$ as $X \to \infty$.
\end{corollary}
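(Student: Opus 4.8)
The plan is to obtain the corollary by feeding Theorem~\ref{MT} into the knot-theoretic dictionary of \cite{Mil}; beyond quoting, the only work is a short bookkeeping step that accounts for the sign of a definite form.

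First I would fix $a \ge 1$ throughout, noting that the count does not depend on $a$ because the algebraic classification of simple $(4a+1)$-knots does not. For a prime $p$ the polynomial $\Delta_p(t) = pt^2 + (1-2p)t + p$ has nonzero discriminant $1-4p$, hence distinct roots, hence is squarefree in $\bZ[t]$; thus the Bayer--Michel finiteness theorem guarantees that only finitely many simple $(4a+1)$-knots have Alexander polynomial $\Delta_p$, so the quantity we are counting is well defined. By the theorem of \cite{Mil} quoted above (Theorem 2.5(vi) and Corollary 2.13 of \cite{Mil}), the simple $(4a+1)$-knots of genus one with Alexander polynomial $\Delta_p$ are in bijection with $\SL_2(\bZ[1/p])$-equivalence classes of binary quadratic forms of discriminant $1-4p$ over $\bZ[1/p]$, and since $p$ is prime these are in turn naturally identified with $\SL_2(\bZ)$-equivalence classes of integral binary quadratic forms of discriminant $1-4p$, imprimitive forms included.

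Next I would use that $1-4p < 0$ for every prime $p$, so each such form is definite, and the sign-change involution $f \mapsto -f$ is an $\SL_2(\bZ)$-equivariant, discriminant-preserving bijection between the positive definite classes and the negative definite classes. Hence the number of $\SL_2(\bZ)$-classes of integral binary quadratic forms of discriminant $1-4p$ is exactly $2H(1-4p)$. Summing over primes $p \le X$ and applying Theorem~\ref{MT} gives
\[
\sum_{p \le X} 2H(1-4p) = 2\, C_{\Art}\cdot\frac{2\pi}{9}\cdot\frac{X^{3/2}}{\log X} + O\!\left(\frac{X^{3/2}}{(\log X)^{2}}\right) = C_{\Art}\cdot\frac{4\pi}{9}\cdot\frac{X^{3/2}}{\log X} + O\!\left(\frac{X^{3/2}}{(\log X)^{2}}\right),
\]
which is the asserted asymptotic, in fact with an explicit secondary term.

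The proof has no real obstacle, since the analytic heavy lifting is entirely contained in Theorem~\ref{MT}. The one point requiring care is the factor of two: the bijection of \cite{Mil} already ranges over \emph{all} $\SL_2(\bZ)$-classes of forms of the given discriminant, so the factor of two must be introduced exactly once, when passing from $H(1-4p)$ (which counts positive definite classes only) to the full count, and not a second time on the knot side.
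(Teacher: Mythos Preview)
Your proposal is correct and follows essentially the same approach as the paper: invoke the bijection from \cite{Mil} between knots with Alexander polynomial $\Delta_p$ and $\SL_2(\bZ)$-classes of integral forms of discriminant $1-4p$, observe that the factor of $2$ arises because $H$ counts only positive definite classes while both signs occur, and then apply Theorem~\ref{MT}. The paper's own argument is a single sentence to this effect; you have simply spelled out the bookkeeping (definiteness, the involution $f\mapsto -f$, squarefreeness of $\Delta_p$) more carefully.
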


The heuristics of \cite{Mil} indicate that most simple knots of genus $1$ (ordered by the height of the Alexander polynomial) should have Alexander polynomial of this form.  That is, the total number of simple $(4a+1)$-knots of genus $1$ with Alexander polynomial $m t^2 + (1-2m) t + m$ for $m \in [-X, X]$ should also be $C_{\Art} \frac{4\pi}{9} X^{3/2}/(\log X) + O(X^{3/2}/(\log (X))^2)$.  It is an interesting question for future research to see if this can be proven: showing it will require getting some control on the sizes of the oriented class groups of the rings $\bZ \left[ \frac{1}{m}, \frac{1 + \sqrt{1-4m}}{2} \right]$, which is more complicated when $m$ is not prime. \\

Finally, we note that the algebraic invariants (the Alexander module and Blanchfield pairing, or equivalently the Seifert matrix) used to classify $(4a+1)$-knots for $a \ge 1$ are also useful in the low-dimensional case $a=0$, though they are no longer complete invariants of the knot, and miss a lot of knot-theoretic information.  Indeed, $(4a+1)$-knots for fixed $a \ge 1$ are in bijection with $S$-equivalence classes of Seifert forms \cite{Mil}[Theorem 2.4].\\

This allows us to restate the previous corollary in terms of objects of interest to low-dimensional topologists.  We obtain that the number of $S$-equivalence classes of $2 \times 2$ Seifert matrices $P$ with Alexander polynomial $\det (t P - P^T) = p t^2 + (1-2p) t + p$ for $p \in [1, X]$ prime is $\sim C_{\Art} \frac{4\pi}{9} X^{3/2}/(\log X)$ as $X \to \infty$, proving Corollary~\ref{cor: seifert} stated earlier in the introduction.

\section{A random Euler product model for Hurwitz class numbers}\label{section:randomEuler}
\subsection{Preliminaries on Hurwitz class numbers}

\begin{definition}
Let $D < 0$ be a negative discriminant.  Then we define the Hurwitz class number of discriminant $D$ by

\[
\Hur(D) = \sum_{\substack{q \bmod \SL_2(\bZ) \\ \Disc(q) = D}} \frac{2}{|\Stab_{\SL_2(\bZ)}(q)|}
\]
where the sum is over all $\SL_2(\bZ)$-equivalence classes of binary quadratic forms, and $\Stab_{\SL_2(\bZ)}(q)$ denotes the stabilizer of the quadratic form $q$ under the action of $\SL_2(\bZ)$ (which always contains the order 2 subgroup $\pm I$).  
\end{definition}

We then have: 

\begin{lemma}\label{H minus Hur}
\begin{equation} H(D) - \Hur(D) = \begin{cases}
            1/2 & \text{if }D = -4n^2 \\
            2/3 & \text{if }D = -3n^2 \\
            0 & \text{otherwise}
        \end{cases}
\end{equation}

\end{lemma}

\begin{proof}
This is an immediate consequence of the well-known fact that a positive definite, reduced binary quadratic form $q$ has $\Stab_{\SL_2(\bZ)}(q) = \pm I$ unless
\begin{itemize}
\item $q$ is $\SL_2(\bZ)$-equvialent to $n(x^2 + xy + y^2)$, in which case $\Stab_{\SL_2(\bZ)}(q)$ has order 6 and $\Disc(q) = -3n^2$, or
\item $q$ is $\SL_2(\bZ)$-equvialent to $n(x^2 + y^2)$, in which case $\Stab_{\SL_2(\bZ)}(q)$ has order 4 and $\Disc(q) = -4n^2$.
\end{itemize}
\end{proof}

As an immediate corollary, we get 
\begin{corollary}\label{difference with Hur}

    \begin{equation}
        \sum_{p \leq X} H(1 - 4p) = \sum_{p \le X} \Hur(1-4p) + O(X^{1/2})
    \end{equation}
\end{corollary}
For the rest of the paper, we will work with $\Hur(D)$ rather than $H(D)$, because it is easier to relate to values of $L$-functions.  We now outline this below. \\

Let $h(D)$ be the classical class number, where we count only primitive, reduced quadratic forms $ax^2 + bxy + cy^2$ with $\gcd(a, b,c) =1$, with the same weighting as above:
\[
h(D) = \sum_{\substack{q \bmod \SL_2(\bZ)\\ \text{primitive} \\ \Disc(q) = D}} \frac{2}{|\Stab_{\SL_2(\bZ)}(q)|}.
\]

We now state the class number formula for classical class numbers.
\begin{proposition}[Class Number Formula for negative discriminants] \label{CNF classic}
If $D$ is a negative (not necessarily fundamental) discriminant, then
\[
h(D) =  \frac{1}{\pi} L(1, \chi_{D}) |D|^{1/2}
\]
\end{proposition}

This implies the following formula for Hurwitz class numbers in terms of $L$-functions.

\begin{lemma}\label{Hurwitz and Dirichlet L}
    \[\Hur(D) = \sum_{d^2 \mid D} \frac{h(D/d^2)}{w(D/d^2)} = \sum_{d^2 \mid D} \frac{1}{\pi} L(1, \chi_{D/d^2}) d^{-1} |D|^{1/2}\]
\end{lemma}
\begin{proof}
The first equality is because any quadratic form with discriminant $D$ is equal to $d$ times a primitive form of discriminant $D/d^2$, while the second one is from Proposition~\ref{CNF classic}
\end{proof}

\subsection{The $L$-function for Hurwitz class numbers}
Lemma~\ref{Hurwitz and Dirichlet L} gives us a formula for the Hurwitz class number as a linear combination of values of the well-understood Dirichlet $L$-functions, which we will ultimately use in our proof of the asymptotic for the sum.  However, for the purposes of heuristically understanding the distribution of Hurwitz class numbers, it is convenient to have a formula that expresses the Hurwitz class number as the value of a single Euler product, and then study the distribution of each of the factors in this product.  Such a formula is known in the literature, for instance in the work of Zagier \cite{Zagier}. However, no self-contained exposition is available, so we provide one here. To this end, define 
\[
\zeta_D(s) = \zeta(2s) \sum_{n \ge 1} \# \{a \bmod 2n : a^2 \equiv D \pmod {4n}
\}n^{-s}
\]
Note that if $D$ is a fundamental discriminant then $\zeta_D(s)$ agrees with the Dedekind zeta function of the quadratic field $\bQ(\sqrt{D})$. The Dirichlet series here converges absolutely for ${\rm Re}(s) > 1$, provided that $D$ is not a square. \\

The following formulation of $\zeta_D(s)$ makes the relation to binary quadratic forms clear.
\begin{proposition}\label{zeta formula}
If $D < 0$, then
 \[   \zeta_D(s) = \sum_{\substack{q \bmod \SL_2(\bZ) \\ \Disc(q) = D}} \frac{1}{|\Stab_{\SL_2(\bZ)}(q)|} \sum_{(m,n)\ne 0 \in \bZ^2} q(m, n)^{-s} 
\]
where $q(m, n)$ denotes the value of the quadratic form $q$ at $(m, n ) \in \bZ^2$.  (That is, $q(m,n) = a m^2 + bmn + cn^2$ if $q = ax^2 + bxy + cy^2$.)
\end{proposition}
\begin{proof}
Consider the following  action of $\SL_2(\bZ)$ on pairs $(q, v)$, where $q = ax^2 + bxy + cy^2$ is an integral quadratic form and $v =(m,n) \in \bZ^2$ : 
\[
g(q, v) = (q \circ g^{-1}, g v).
\]
That is, $\SL_2(\bZ)$ is acting on the $\bZ^2$ component in the standard way.  The action on the binary quadratic form is not quite the usual one, which would be $g(v) = q \circ g^T$, but the orbits are the same. \\

This action has the property that it preserves the value $q(v)$, so we may define the sum
\[
\sum_{\substack{(q, v) \bmod \SL_2(\bZ) \\ \Disc (q) = D \\ v \ne (0,0) \in \bZ^2}} q(v)^{-s},
\]
which as we will see later converges absolutely for $s > 1$.  We first decompose this sum according to the $\SL_2$-orbit of $q$:
\begin{equation}\label{eq:decomp by q}
\sum_{\substack{(q, v) \bmod \SL_2(\bZ) \\ \Disc (q) = D \\ v \ne (0,0) \in \bZ^2}} q(v)^{-s} = \sum_{\substack{q \bmod \SL_2(\bZ) \\ \Disc(q) = D}} \sum_{\substack{v \ne 0 \bmod \Stab_{SL_2(\bZ)}(q)}} q(v)^{-s} = \sum_{\substack{q \bmod \SL_2(\bZ) \\ \Disc(q) = D}} \frac{1}{|\Stab_{\SL_2(\bZ)}(q)|} \sum_{v \ne 0 \in \bZ^2} q(v)^{-s}.
\end{equation}
To get the last equality above, we note that $\Stab_{\SL_2(\bZ)}(q)$ is finite and its nontrivial elements fix no nonzero element of $\bZ^2$.\\

Now we decompose our sum according to the $\SL_2(\bZ)$ orbit of $v$.  Note that every nonzero $\SL_2(\bZ)$ orbit on $\bZ^2$ contains a unique element of the form $(m, 0)$ with $m$ a positive integer.  Furthermore, the stabilizer in $\SL_2(\bZ)$ of $(m, 0)$ is exactly the group $U$ of upper-triangular matrices $\left\{ \pm \left(\begin{smallmatrix} 1 & *\\ 0 & 1 \end{smallmatrix}\right) \right\}$, whence
\begin{equation}
\sum_{\substack{(q, v) \bmod \SL_2(\bZ) \\ \Disc (q) = D \\ v \ne (0,0) \in \bZ^2}} q(v)^{-s} = \sum_{m \ge 1} \sum_{q \bmod U} q(m, 0)^{-s}.
\end{equation}
Now we write out $q = ax^2 + bxy + cy^2$, so that $q(m, 0) = m^2 a$.  We observe that the action of $U$ on binary quadratic forms preserves the coefficient $a \ge 1$, while it sends $b \mapsto b + 2 k a$ for arbitrary $k \in \bZ$.  Meanwhile, the coefficient $c = (b^2 - D)/(4a)$ is determined by $a$ and $b$, and is an integer exactly when $b ^2 \equiv D \pmod {4a}$. \\

Hence
\begin{equation}
\sum_{q \bmod U} q(m, 0)^{-s} = \sum_{a \ge 1} \sum_{\substack{b \bmod 2a \\ b^2 \equiv D \pmod {4a}}} (m^2a)^{-s} = m^{-2s}\sum_{a \ge 1} \#\{b \bmod 2a : b^2 \equiv D \pmod {4a} \} a^{-s},
\end{equation}
and plugging into the previous equation gives
\begin{equation} \label{eq:decomp by v}
\sum_{\substack{(q, v) \bmod \SL_2(\bZ) \\ \Disc (q) = D \\ v \ne (0,0) \in \bZ^2}} q(v)^{-s} = \sum_{m \ge 1} m^{-2s}\sum_{a \ge 1} \#\{b \bmod 2a : b^2 \equiv D \pmod {4a} \} a^{-s}.
\end{equation}
Comparing equations \ref{eq:decomp by q} and \ref{eq:decomp by v} gives the desired formula.
\end{proof}

Define \[L_D(s) = \zeta_D(s) / \zeta(s).\] The function $L_D(s)$ is sometimes called a ``Zagier $L$-series''  as Zagier \cite{Zagier} defined the series $L_D(s)$ and showed that it analytically continues to an entire function.  For our purposes, however, we will only need that $L_D(s)$ can be analytically continued to a neighborhood of $s =1$.  This can be checked by computing the local factors directly. For further details, see Section~\ref{random euler}. Note that at all but finitely many primes they are the same as those in the $L$-series $L(1, \chi_D)$, which converges (non-absolutely) for $\mathrm{Re}(s) > 0$.\\

The following form of the analytic class number formula which relates values of $L_D$ to Hurwitz class numbers can be found in the literature, see, e.g. (\cite{Zagier}, Proposition 3, (iv)).

\begin{proposition}\label{Hurwitz CNF}
For any negative discriminant $D$, not necessarily squarefree, we have
\[
\Hur(D) = \frac{1}{\pi} L_D(1) |D|^{1/2}
\]
\end{proposition}

We will give a self-contained proof here, in the spirit of the proof of the usual class number formula, which was originally proved by Dirichlet using binary quadratic forms.  This involves evaluating the residue of the pole of $\zeta_D(s)$ at $s = 1$ in two different ways, using Proposition~\ref{zeta formula}. Alternatively, this result can be deduced from Lemma~\ref{Hurwitz and Dirichlet L} by manipulating $L$-series. \\

The key tool in our computation will be the following lemma.

\begin{lemma}\label{quadratic pole lemma}
Let $q$ be a positive definite quadratic form with real coefficients.  Then
\[
\sum_{v \ne 0 \in \bZ^2} q(v)^{-s} = 2 \pi|\Disc(q)|^{-1/2}(s-1)^{-1} + O(1)
\]
as $s \to 1^+$.
\end{lemma}

\begin{proof}
    The left hand side converges absolutely for $s > 1$, so we can rearrange it as a Dirichlet series $\sum_{k \ge 1} a_k k^{-s}$, where 
    \[
    a_k = \# \{v \in \bZ^2 \setminus 0 \mid q(v) = k\}.
    \]

The partial sums 
\[
\sum_{k \le N} a_k = \# \{v \in \bZ^2 \setminus 0 \mid q(v) \le N\} = \# \{v \in \bZ^2 \mid q(v) \le N\} - 1
\]
can be estimated using the geometry of numbers as follows.  The region $ \{v \in \bR^2 \mid q(v) \le N\}$ has area $2 \pi N |\Disc(q)|^{-1/2}$, and its projections on the axes have length $O(\sqrt{N})$, so it contains $4 \pi N /|\Disc(q)| + O(\sqrt{N})$ lattice points. In this proof, the constants implicit in the big $O$ notation are all absolute constants which are independent of the choice of quadratic form $q$ or the values of any of the variables. \\

It follows that
\[
\sum_{k \le N} a_k = 2\pi |\Disc(q)| ^{-1/2} N + O(N^{1/2}).
\]
Next we decompose our Dirichlet series as 
\[
\sum_{k \ge 1} a_k k^{-s} = 2\pi |\Disc(q)| ^{-1/2} \zeta(s) + \sum_{k \ge 1} b_k k^{-s},
\]
where $b_k = a_k - 2\pi |\Disc(q)| ^{-1/2}$, and
\[
\sum_{k \le N} b_k = O(N^{1/2}).
\]
We apply partial summation to $\sum_{k \ge 1} b_k k^{-s}$, and obtain that, for any $s > 1$
\begin{equation*}
\begin{split}
\sum_{k \ge 1} b_k k^{-s}&= \lim_{N \to \infty} b_N N^{-s} - \int_1^{\infty} \left( \sum_{1 \le k \le t} b_k \right) s t^{-s-1} dt \\
&= \lim_{N \to \infty} O(N^{1/2 - s}) + s \int_1^{\infty} O(t^{-s-1/2}) dt
\end{split}
\end{equation*}
which is bounded as $s \to 1^+$. \\

We also have the Laurent expansion for the Riemann zeta function around $s =1$: $\zeta(s) = (s-1)^{-1} + O(1)$ as $s \to 1^+$,
and so we conclude
\[
\sum_{v \ne 0 \in \bZ^2} q(v)^{-s} = 2\pi |\Disc(q)| ^{-1/2} \zeta(s) + \sum_{k \ge 1} b_k k^{-s} = 2\pi |\Disc(q)| ^{-1/2}(s-1)^{-1} + O(1) 
\]
as $s \to 1^+$.
\end{proof}
\begin{remark}
The left-hand side of Lemma~\ref{quadratic pole lemma} is, up to a factor of 2, the Epstein zeta function of the quadratic form $q$, so this result also follows from known analytic properties of the Epstein zeta function such as the Kronecker limit formula.
\end{remark}

We are now ready to prove the class number formula for Hurwitz class numbers.
\begin{proof}[Proof of Proposition~\ref{Hurwitz CNF}]
   
    We will prove this by looking at the behavior of $\zeta_D(s)$ as $s \to 1^+$. On the one hand,
\begin{equation}
\begin{split}
\zeta_D(s) &= L_D(s) \zeta(s)\\
&= (L_D(1) + O_D(s-1)) ((s-1)^{-1}+ O(1))\\
&= L_D(1) (s-1)^{-1} + O_D(1)
\end{split}
\end{equation}
since $L_D(s)$ has analytic continuation to a neighborhood of $s =1$. (Here the subscript $D$ in the big $O$ notation indicates that the implicit constant depends on $D$.) On the other hand, applying Lemma~\ref{quadratic pole lemma} to Proposition~\ref{zeta formula} gives 
\begin{equation}
\begin{split}
\zeta_D(s) &= \sum_{\substack{q \bmod \SL_2(\bZ) \\ \Disc(q) = D}} \frac{1}{|\Stab_{\SL_2(\bZ)}(q)|} \sum_{(m,n)\ne 0 \in \bZ^2} q(m, n)^{-s} \\ &=
\sum_{\substack{q \bmod \SL_2(\bZ) \\ \Disc(q) = D}} \frac{1}{|\Stab_{\SL_2(\bZ)}(q)|}  \left( 2 \pi  |D|^{-1/2}(s-1)^{-1} +  O_D(1)\right) \\
& = \pi |D|^{-1/2} (s-1)^{-1} \left(\sum_{\substack{q \bmod \SL_2(\bZ) \\ \Disc(q) = D}} \frac{2}{|\Stab_{\SL_2(\bZ)}(q)|} + O_D(s-1)\right) \\
&= \pi |D|^{-1/2}\Hur(D) (s-1)^{-1} + O_D(1).
\end{split}
\end{equation}
These imply 
\begin{equation} \Hur(D) = \frac{1}{\pi} |D|^{1/2} L_D(1) + O_D(s-1),
\end{equation}
and taking $s \rightarrow 1^+$ gives the result. \end{proof}

\subsection{The Random Euler Product model}\label{random euler}

By the Chinese remainder theorem, $\zeta_D(s)$ factorizes as a product of local factors: $\zeta_D(s) = \prod_{p} \zeta_{D, p}(s)$, and likewise we can factor $L_D(s) = \prod_{p} L_{D, p}(s)$, where $L_{D, p}(s) = (1-p^{-s}) \zeta_{D, p}(s)$. \\

If $p^2\nmid D$ then $L_{D, p}(s) = (1- \chi_{D}(p)p^{-s})^{-1}$; in particular in this case, for fixed $p$ and varying $D$, $L_{D, p}(s)$ only depends on $D$ mod $p$.  The more general case is messier and depends both on the largest $a$ such that $p^{2a} \mid D$ and on the value of $\chi_D(p^{-2a}D)$.  For the case where $p$ is odd, the values are summarized in the table below, where the last part indicates the probability over all possible values of the discriminant $D$, namely integers which are $0$ or $1$ mod $4$.

\begin{center}
\begin{tabular}{ |c|c|c|c| } 
\hline
 & $\zeta_{D, p}(s)$& $L_{D, p}(s)$ & probability over all $D \equiv 0, 1 \pmod 4$ \\
\hline
 $\chi_D(p)  = 1$  & $(1- p^{-s})^{-2}$ & $(1-p^{-s})^{-1}$  & $\frac{p-1}{2p}$  \\ 
 \hline
 $\chi_D(p) = -1$ & $(1-p^{-2s})^{-1}$ & $(1+p^{-s})^{-1}$& $\frac{p-1}{2p}$ \\
 \hline
 $\chi_D(p) = 0$  and  $p^2\nmid D$  & $(1-p^{-s})^{-1}$ & 1 & $\frac{p-1}{p^2}$ \\
 
 \hline
 $D = p^{2a} D', \chi_{D'}(p) = 1$ & $*$ & $*$ & $\frac{p-1}{2p^{2a+1}}$ \\
 \hline
 $D = p^{2a} D', \chi_{D'}(p) = - 1$ & $*$ & $*$ & $\frac{p-1}{2p^{2a+1}}$ \\
 \hline
 $D = p^{2a} D', \chi_{D'}(p) = 0, p^2\nmid D$ &  $*$ & $*$ & $\frac{p-1}{p^{2a+2}}$ \\
\hline
\end{tabular}
\end{center}
Here the $\zeta$ and $L$-factors in the second half of the table are explicitly computable expressions depending on $a$; however we do not write them out here as we will not need their precise values for the computation in this paper. However, they would be necessary to get higher moments. \\

Note that for any fixed modulus $n$, the average over all $D \equiv 0, 1 \pmod 4$ of $\# \{a \bmod 2n : a^2 \equiv D \pmod {4n} \}$ is $1$ since the number of possible values of $a$ is the same as the number of possible values of $D$. \\

It follows that, heuristically, the average of $\zeta_D(s)$ over all $D \equiv 0, 1 \pmod 4$ should be $\zeta(2s)\zeta(s)$, and the average of $L_{D}(s)$ over all $D \equiv 0, 1 \pmod 4$ should be $\zeta(2s)$. We note that if one sets $s = 1$, this should agree with the known asymptotic for the average of Hurwitz class numbers. \\ 

We also get the corresponding local statements: for fixed odd $p$, the average over all $D \equiv 0, 1 \pmod 4$ of $\zeta_{D, p}(s)$ is heuristically $(1- p^{-2s})^{-1}(1-p^{-s})^{-1}$, and the  average of $L_{D, p}(s)$ over all $D \equiv 0, 1 \pmod 4$ is heuristically $(1- p^{-2s})^{-1}$. \\

Note that $(1- p^{-2s})^{-1}$ is the average of the $L$-factors $(1-p^{-s})^{-1}$ and $(1+p^{-s})^{-1}$ in the first two rows of the table.  This explains why restricting to $D$ prime, where only the first two rows of the table occur, does not affect the average size of Hurwitz class numbers.  As a consequence, the average of $L_{D, p}(s)$ over all $D$ with $p \mid D$ (the weighted average of the last 4 rows of the table) must also be $(1-p^{2s})^{-1}$. \\

However, the restriction $D = 1-4p'$, $p'$ prime, that we are interested in does affect the heuristic average, since for any prime modulus $p$, the congruence class $1 \pmod p$ now appears with density 0, lowering the average value of each local factor.  The new average value of the local $L$-factor $L_{D, p}$  can be computed from the updated table:

\begin{center}
\begin{tabular}{ |c|c|c|c| } 
\hline
 & $\zeta_{D, p}(s)$& $L_{D, p}(s)$ & probability over  $D = 1-4p'$ \\
\hline
 $\chi_D(p)  = 1$  & $(1- p^{-s})^{-2}$ & $(1-p^{-s})^{-1}$  & $\frac{p-3}{2(p-1)}$  \\ 
 \hline
 $\chi_D(p) = -1$ & $(1-p^{-2s})^{-1}$ & $(1+p^{-s})^{-1}$& $\frac{1}{2}$ \\
 \hline
cases with $p \mid D$ (averaged) & $(1- p^{-2s})^{-1}(1-p^{-s})^{-1}$ & $(1-p^{-2s})^{-1}$ & $\frac{1}{p-1}$\\ 
\hline
\end{tabular}
\end{center}
as
\begin{equation} 
\begin{split}
\mathbb{E}(L_{p,D} \mid D = 1-4p') &=
 (1-p^{-2s})^{-1} \cdot \left(\frac{p-3}{2(p-1)} \cdot (1+p^{-s}) + \frac{1}{2} \cdot (1-  p^{-s})+ \frac{1}{p-1} \cdot 1 \right) \\
&= (1-p^{-2s})^{-1} \left(1 - \frac{p^{-s}}{p-1} \right).
\end{split}
\end{equation}
Setting $s = 1$, we get that, for $p$ an odd prime,
\begin{equation}\label{eq:compare}
\mathbb{E}(L_{p,D}(s) \mid D = 1-4p') = \mathbb{E}(L_{p, D}(1) \mid D \text{ any discriminant} ) \cdot \left(1- \frac{1}{p(p-1)} \right)
\end{equation}

The case $p = 2$ also needs to be checked separately.  We still have $\mathbb{E}(L_{2, D}(1) \mid D \text{ any discriminant} ) = (1-2^{-2s})^{-1}$ by the same argument as before. 
 On the other hand, for odd primes $p'$, $D = 1-4p'$ is $5 \mod 8$, so $\chi_2(D) = -1$ and $L_{2, D}(s) = (1+2^{-s})^{-1}$.  Setting $s = 1$, we obtain $\mathbb{E}(L_{2,D}(1) \mid D = 1-4p') = 2/3$ and $\mathbb{E}(L_{p, D}(1) \mid D \text{ any discriminant} ) = 4/3$, verifying \eqref{eq:compare} for $p = 2$ also. \\


Therefore, heuristically treating the local zeta factors as independent random variables, we expect that restricting to discriminants $D = 1 - 4p'$ with $p$ prime should multiply the average class number by \[\prod_{\ell \ge 2 \text{ prime}} \left(1 - \frac{1}{\ell(\ell-1)} \right),\] which coincidentally is exactly the definition of Artin's constant.  By the prime number theorem, this is consistent with our result Theorem~\ref{MT} and its refined version Theorem~\ref{main result CNF}.


\section{Preliminary lemmas}

\subsection{An unrestricted bound for bilinear character sums}

We begin with the statement of the following lemma, which originates from \cite{FK}: 

\begin{lemma}[Bound on bilinear character sums] \label{double} Let $\{\alpha_n\}, \{\beta_m\}$ be two sequences of complex numbers with each term having absolute value bounded by $1$. Let $M,N$ be positive real numbers. Then  we have
\[\sum_{m \leq M} \sum_{n \leq N} \alpha_m \beta_n \mu^2(2m) \mu^2(2n) \Kronecker{m}{n} \] 
\[ \ll MN \min \left\{ \left(M^{-1/2} + (N/M)^{-1/2}  \right), \left(N^{-1/2} + (M/N)^{-1/2} \right) \right\}.  \]
\end{lemma}

Lemma \ref{double} essentially follows from the Polya-Vinogradov inequality. There are several instances in this paper where Lemma \ref{double} is needed.  However, the sharpest form of Lemma \ref{double} requires that the sum is supported on \emph{odd squarefree numbers}. We observe here that applying a squarefree sieve allows us to remove this restriction at the cost of a logarithmic factor, which is often an acceptable loss. We thus obtain the following variant of Lemma \ref{double}: 

\begin{lemma}[Unrestricted bound on bilinear character sums] \label{unrestricted double oscillation} Let $\{\alpha_n\}, \{\beta_m\}$ be two sequences of complex numbers with each term having absolute value bounded by $1$. Let $M,N$ be positive real numbers. Then  we have
\[\sum_{m \leq M} \sum_{n \leq N} \alpha_m \beta_n \Kronecker{m}{n}\] 
\[ \ll MN (\log M + \log N) \min \left\{ \left(M^{-1/2} + (N/M)^{-1/2}  \right), \left(N^{-1/2} + (M/N)^{-1/2} \right) \right\}.  \]
\end{lemma}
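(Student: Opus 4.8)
The plan is to deduce the unrestricted estimate from Lemma~\ref{double} by a ``squarefree and $2$-free'' sieve: strip from each of $m,n$ its even part and its square part, apply Lemma~\ref{double} to the surviving odd squarefree cores, and sum the contributions of the stripped-off parameters, which converge up to a single logarithmic loss. First I would factor each $m\in[1,M]$ uniquely as $m=2^{a}r^{2}u$ with $a\ge 0$, $r$ odd, and $u$ odd and squarefree, and likewise $n=2^{b}s^{2}v$; here $(a,r,u)$ ranges over all triples with $2^{a}r^{2}u\le M$, so in particular $a=O(\log M)$ and $r\le M^{1/2}$. Using complete multiplicativity of the symbol in its lower entry, and (for an odd squarefree lower entry) in its upper entry,
\[
\Legendre{m}{n}=\Legendre{m}{2}^{b}\Legendre{m}{s}^{2}\Legendre{m}{v}
=\Legendre{m}{2}^{b}\Legendre{2}{v}^{a}\;\one[\gcd(m,s)=1]\;\one[\gcd(r,v)=1]\;\Legendre{u}{v},
\]
where the factor $\Legendre{m}{2}^{b}$ vanishes unless $a=0$ or $b=0$, consistent with $\Legendre{m}{n}=0$ when $2\mid\gcd(m,n)$.

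Now fix $(a,b,r,s)$ and set $M_{1}=M/(2^{a}r^{2})$, $N_{1}=N/(2^{b}s^{2})$. For these fixed parameters every residual factor above depends on only one of $u,v$ (using $\one[\gcd(m,s)=1]=\one[\gcd(r,s)=1]\,\one[\gcd(u,s)=1]$ since $s$ is odd), so absorbing them defines sequences $\alpha'_{u}=\alpha_{2^{a}r^{2}u}\Legendre{2^{a}r^{2}u}{2}^{b}\one[\gcd(u,s)=1]$ and $\beta'_{v}=\beta_{2^{b}s^{2}v}\Legendre{2}{v}^{a}\one[\gcd(r,v)=1]$, each of modulus $\le 1$. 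Since ``$u$ odd and squarefree'' is precisely the condition $\mu^{2}(2u)=1$, regrouping the double sum gives
\[
\sum_{m\le M}\sum_{n\le N}\alpha_{m}\beta_{n}\Legendre{m}{n}
=\sum_{a,b,r,s}\one[\gcd(r,s)=1]\sum_{u\le M_{1}}\sum_{v\le N_{1}}\alpha'_{u}\beta'_{v}\,\mu^{2}(2u)\mu^{2}(2v)\Legendre{u}{v},
\]
and Lemma~\ref{double} bounds the inner sum by $\ll M_{1}N_{1}\bigl(M_{1}^{-1/2}+(N_{1}/M_{1})^{-1/2}\bigr)=M_{1}^{1/2}N_{1}+M_{1}^{3/2}N_{1}^{1/2}$.

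It then remains to sum over the sieve parameters. Dropping $\one[\gcd(r,s)=1]\le 1$ and separating the $(a,r)$ and $(b,s)$ sums,
\[
\Bigl|\sum_{m\le M}\sum_{n\le N}\alpha_{m}\beta_{n}\Legendre{m}{n}\Bigr|
\ll\Bigl(\sum_{a,r}M_{1}^{1/2}\Bigr)\Bigl(\sum_{b,s}N_{1}\Bigr)+\Bigl(\sum_{a,r}M_{1}^{3/2}\Bigr)\Bigl(\sum_{b,s}N_{1}^{1/2}\Bigr).
\]
Here $\sum_{a,r}M_{1}^{1/2}=M^{1/2}\sum_{a}2^{-a/2}\sum_{r\le(M/2^{a})^{1/2}}r^{-1}\ll M^{1/2}\log M$, and similarly $\sum_{b,s}N_{1}\ll N$, $\sum_{a,r}M_{1}^{3/2}\ll M^{3/2}$, $\sum_{b,s}N_{1}^{1/2}\ll N^{1/2}\log N$ (all inner sums over odd $r,s$, with the series in $a,b,r,s$ geometric or absolutely convergent). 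Thus the total is $\ll M^{1/2}N\log M+M^{3/2}N^{1/2}\log N\ll(\log M+\log N)\,MN\bigl(M^{-1/2}+(N/M)^{-1/2}\bigr)$; using the other branch of the minimum in Lemma~\ref{double} (equivalently, swapping the roles of $m$ and $n$) gives the same bound with $\bigl(N^{-1/2}+(M/N)^{-1/2}\bigr)$, and taking the minimum of the two yields Lemma~\ref{unrestricted double oscillation}.

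There is no deep step here beyond Lemma~\ref{double}; the points requiring care are purely bookkeeping. First, one must check that after expanding $\Legendre{m}{n}$ every residual factor (the powers of $\Legendre{\cdot}{2}$, the factors $\Legendre{2}{\cdot}$, and the coprimality indicators) really can be pushed into one of the two coefficient sequences while keeping the modulus $\le 1$ and keeping the support on odd squarefree arguments. Second, one must check that the sum over $(a,r)$ and $(b,s)$ costs exactly one logarithm and preserves the balanced ``$\min$'' shape of the estimate, rather than degrading it to the weaker form $M^{-1/2}+N^{-1/2}$ (which would force use of the $(MN)^{\varepsilon}$-lossy second bound of Lemma~\ref{double}).
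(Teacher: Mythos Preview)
Your proof is correct and follows essentially the same approach as the paper: strip off the even and square parts of $m$ and $n$, reduce to Lemma~\ref{double} on the odd squarefree cores, and check that summing over the stripped parameters costs at most one logarithm. The only difference is organizational: the paper first removes the oddness restriction in a separate (hand-waved) step and then writes $m=k^{2}m'$, $n=\ell^{2}n'$ with $m',n'$ squarefree, whereas you handle the $2$-adic part and the square part simultaneously via $m=2^{a}r^{2}u$, $n=2^{b}s^{2}v$; your bookkeeping of the residual Kronecker factors is in fact more explicit than the paper's.
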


Note that Lemma~\ref{double} can be restated as the following:
\begin{equation}\label{double bound}
\sum_{m \leq M} \sum_{n \leq N} \alpha_m \beta_n   \Kronecker{m}{n} 
 \ll MN \min \left\{ \left(M^{-1/2} + (N/M)^{-1/2}  \right), \left(N^{-1/2} + (M/N)^{-1/2} \right) \right\}.  
\end{equation}
where the sequences $\{\alpha_n\}, \{\beta_m\}$ are supported on odd squarefree numbers.  Comparing with Lemma~\ref{unrestricted double oscillation} our task is to remove the two restrictions to odd and squarefree numbers at the cost of worsening the bound.  The restriction to odd numbers is easy and only affects the constant in the bound, while the restriction to squarefree numbers is the main part of the proof and contributes the extra log factor.

\begin{proof}[Proof of Lemma \ref{unrestricted double oscillation}]
First we deal with the restriction to odd numbers.  As every even squarefree number is twice an odd squarefree number, we can decompose the double sum over all $m$, $n$ in \eqref{double bound} as a sum of four sums depending on the parity of $m$ and $n$.  Since any even squarefree number is twice an odd squarefree number, each of these can be written as sums over odd squarefree numbers, and thus can be bounded using Lemma~\ref{double}.  We conclude that that \eqref{double bound} holds for all sequences $\{\alpha_n\}, \{\beta_m\}$  supported on (not necessarily odd) squarefree numbers, although with a larger implicit constant.

The rest of the proof is devoted to removing the squarefree restriction
By symmetry, it is enough to show that our left hand side $\Xi(M, N) = \sum_{m \leq M} \sum_{n \leq N} \alpha_m \beta_n \Kronecker{m}{n}$ satisfies $\Xi(M, N) \ll ( M^{1/2} N + M^{3/2} N^{1/2} ) (\log M + \log N)$. \\

We then decompose $\Xi(M, N)$ as 
\[\Xi(M, N) = \sum_{k \le \sqrt{M}} \sum_{\ell \le \sqrt{N}} \Xi_{k, \ell}(M, N)\]
where 
\begin{align*}
\Xi_{k, \ell}(M, N) &= \sum_{m' \le k^{-2}M \text{ squarefree}} \sum_{n' \le \ell^{-2}M \text{ squarefree}} \alpha_{k^2 m'} \beta_{\ell^2 n'} \Kronecker{k^2 m'}{\ell^2 n'}\\
&=\sum_{m' \le k^{-2}M \text{ squarefree}} \sum_{n' \le \ell^{-2}M \text{ squarefree}} \alpha_{k^2 m'} \beta_{\ell^2 n'} \Kronecker{k^2}{\ell^2} \Kronecker{k^2}{n'} \Kronecker{m'}{\ell^2} \Kronecker{m'}{n'} \\
& = \Kronecker{k^2}{\ell^2} \sum_{m' \le k^{-2}M \text{ squarefree}} \sum_{n' \le \ell^{-2}M \text{ squarefree}} \alpha'_{m'} \beta'_{n'} \Kronecker{m'}{n'}.
\end{align*}
is the total contribution from all pairs $(m, n)$ of the form $(k^2 m', \ell^2 n')$ with $m'$ and $n'$ squarefree, where $\alpha'_{m'} = \Kronecker{m'}{\ell^2} \alpha_{k^2 m'} $ and $\beta'_{m'} = \Kronecker{m'}{\ell^2} \beta_{\ell^2 n'}$.
Since $m'$ and $n'$ are now restricted to be squarefree, we can now apply \eqref{double bound} to obtain
\[
\Xi_{k, \ell}(M, N) \ll (k^{-1} \ell^{-2} M^{1/2} N + k^{-3} \ell^{-1} M^{3/2} N^{1/2}).
\]
Summing over $k, \ell$ gives the bound
\[
\Xi(M, N) \ll  \sum_{k \le \sqrt{M}} \sum_{\ell \le \sqrt{N}}  (k^{-1} \ell^{-2} M^{1/2} N + k^{-3} \ell^{-1} M^{3/2} N^{1/2}) \asymp \log(M)M^{1/2} N + \log(N) M^{3/2} N^{1/2},
\]
which is of the size needed.
\end{proof}

\subsection{A Siegel--Walfisz-type lemma} 

As is well-known by now, in problems involving summations over Kronecker symbols where both arguments are variable, it is often necessary to consider the sum over just one variable. In such cases Lemma \ref{double} does not apply, and so we will require the following result, which is derived from the Siegel--Walfisz theorem. 



\begin{lemma}[Siegel--Walfisz] \label{S-W new}
Let $A$ be a positive real number and $(\alpha_k)$, $k \ge 1$ be a sequence of complex numbers, periodic with period  $q \geq 1$, such that $|\alpha_k| \le 1$ for all $k$.
\[
\sum_{p \le X} \alpha_p = \frac{\sum_{a \in \bZ/q\bZ^*} \alpha_a}{\phi(q)} \Li(X) + O_A \left(q  X (\log X)^{-A} \right)
\]
uniformly in $X$ and $q$.
\end{lemma}

\begin{proof}
We have
\[
\sum_{p \le X} \alpha_p = \sum_{a \in  \bZ/q\bZ^*} \alpha_a \pi(X; a, q)
\]
where $\pi(X; a, q)$ denotes the number of primes $p \le X$ congruent to $a$ modulo $q$.\\

Now the Siegel--Walfisz theorem \cite[Corollary 5.29]{IK} says that 
\[
\pi(X; a, q) = \frac{1}{\phi(q)} \Li(X) + O_A(X (\log X)^{-A} ).
\]
  Taking the weighted sum of this over all congruence classes modulo $q$, we get
\[
\sum_{p \le X} \alpha_p = \frac{\sum_{a \in \bZ/q\bZ^*} \alpha_a}{\phi(q)} \Li(X) + \phi(q) O_A \left( X (\log X)^{-A} \right) = O_A \left(q X (\log X)^{-A} \right)
\]
since $\phi(q) \le q$.\end{proof}

We note that the statement of Lemma \ref{S-W new} implies that we can absorb the dependence on $q$ in the error term, provided that $q \leq (\log X)^{A^\prime}$ for some positive number $A^\prime$, and then adjusting $A$ accordingly. This is the usual way that Siegel--Walfisz is stated. \\

In our proof, we will need to apply this to the sequence  $\alpha_k = \Kronecker{(1-4k)/d^2}{n}$, for arbitrary positive integers $n$, $d$, which is periodic with period $d^2n$.  For this, we will need to calculate the following quantity, which will appear in the coefficient of the main term: for integers $n$ and $d$ with $d$ odd, define 
\begin{equation}\label{Define A(n,d)}
\A(n, d) = \frac{\phi(d^2)}{\phi(n d^2)}\sum_{\substack{a \in (\bZ/d^2 n\bZ)^\ast \\ d^2 \mid 1-4a} } \Kronecker{(1-4a)/d^2}{n}.
\end{equation}

\begin{lemma}\label{sum mod d^2m}   
For fixed $d$, $\A(n, d)$ is multiplicative as a function of $n$. Furthermore, we have the following identity of Dirichlet series:

\begin{equation}\label{Dirichlet}
 \sum_{n} \A(n, d) n^{-s}
= (1-2^{-s}) \zeta(2s) \prod_{\ell \nmid d \text{ odd}} \left( 1 - \left(\frac{1}{\ell-1}\right) ( \ell^{-s} + \ell^{-2s}) \right) \prod_{\ell \mid d} (1- \ell^{-2s-1}),
\end{equation}
which converges absolutely for $s > 1/2$.
\end{lemma}

\begin{proof}


We change variables to $b = (1-4a)/d^2$, which is well-defined as an element of $\bZ / 4n \bZ$.  We then restrict ourselves to a sum over classes $b \in \bZ / 4n \bZ$ such that $d^2 b \equiv 1 \pmod 4$ and $(1-d^2 b)/4$ is relatively prime to $ d^2 n$.  Note that $(1-d^2 b)/4$ is relatively prime to $d^2$, so it's sufficient to require that $(1-d^2 b)/4$ is a unit modulo $n$.\\

Define  $\mathcal{S}(n, d) :=  \{ b \in \bZ / 4n \bZ \mid d^2 b \equiv 1 \pmod 4, (1-d^2 b)/4 \text { is a unit modulo }n\} $, so that 
\begin{equation}\label{formula}
    \A(n, d) = \frac{\phi(d^2)} {\phi(nd^2)} \sum_{b \in \mathcal{S}(n, d)} \Kronecker{b}{n}.
\end{equation}
We note that 
\[\frac{\phi(d^2)} {\phi(nd^2)} = \left(n \cdot \prod_{\substack{\ell \mid n\\ \ell \nmid d}} \left(1 - \frac{1}{\ell} \right) \right)^{-1}\] 
is a multiplicative function of $n$, so it suffices to show that $\sum_{b \in \mathcal{S}(n, d)} \Kronecker{b}{n}$ is multiplicative.\\

Now suppose $n = n_1 n_2$ with $\gcd(n_1, n_2) = 1$ .
By the Chinese remainder theorem, there is a natural bijection $\mathcal{S}(n, d)\to \mathcal{S}(n_1, d) \times \mathcal{S}{(n_2, d)}$, given by $b \bmod 4n \mapsto (b \bmod 4n_1, b \bmod 4n_2)$.  Furthermore, the value of the Kronecker symbol $\Kronecker{b}{n_i}$ depends only on the value of $b$ modulo $4 n_i$, and therefore
\begin{equation}
\sum_{b \in \mathcal{S}(n, d)} \Kronecker{b}{n} = \left( \sum_{b \in \mathcal{S}(n_1, d)} \Kronecker{b_1}{n} \right) \cdot \left( \sum_{b \in \mathcal{S}(n_2, d)} \Kronecker{b_2}{n} \right).
\end{equation}
This completes our proof of multiplicativity.\\

We now check the identity of Dirichlet series. By the multiplicativity proved above, it's enough to check for each prime $p$ that $\sum_k \A(\ell^k, d) (\ell^k)^{-s}$ agrees with the Euler factor at $p$ in the right hand side of \eqref{Dirichlet}.
We break up into cases, and use equation \eqref{formula} to compute $\A(\ell^k, d)$ in each of these cases. \\

{\bf Case $\ell = 2$:} 
Here
\[
\A(2^k, d) = \frac{\phi(d^2)}{\phi(2^k d^2)} \sum_{b \in \mathcal{S}(2^k, d)} \Kronecker{b}{2^k}.
\]
For $k \ge 1$, the first factor is $2^{1-k}$ by multiplicativity of $\phi$.
We compute 
\[\mathcal{S}(2^k, d) = \{b \in \bZ/(2^{k+2} \bZ) \mid  b \equiv 5 \pmod 8\}\] 
for any odd $d$.   This has cardinality $2^{k-1}$ for $k \ge 1$,
and $\Kronecker{b}{2^k} = (-1)^k$ for any $b \in \mathcal{S}(2^k, d)$, so $\sum_{b \in \mathcal{S}(2^k, d)} \Kronecker{b}{2^k} = (-1)^k 2^{k-1}$ for $k \ge 1$, and
\[
\A(2^k, d) = \frac{\phi(d^2)} {\phi(2^k d^2)} \sum_{b \in \mathcal{S}(2^k, d)} \Kronecker{b}{2^k} = 2^{1-k} \left( (-1)^k 2^{k-1}\right) = (-1)^k
\]
for $k \ge 1$.  We also have $\A(1, d) = 1$, so
\[
\sum_{k \ge 0} \A(2^k, d) (2^k)^{-s} = \sum_{k \ge 0} (-1)^k (2^k)^{-s} = \frac{1- 2^{-s}}{1-2^{-2s}},
\]
agreeing with the Euler factor at 2  in \eqref{formula}.\\

{\bf Case $\ell \mid d$:}
Here
\[
\A(\ell^k, d) = \frac{\phi(d^2)}{\phi(\ell^k d^2)} \sum_{b \in \mathcal{S}(\ell^k, d)} \Kronecker{b}{p^k}
\]
The first factor is $\phi(d^2)/\phi(\ell^k d^2) = \ell^{-k}$ because $\ell \mid d$.  To compute the sum, note that $(1-d^2 b)/4$ is always a unit mod $\ell \mid d$. It follows that 
\[\mathcal{S}(\ell^k, d) = \{ b \in \bZ / 4\ell^k \bZ \mid b\equiv 1 \pmod 4\},\] 
and thus for $k \ge 1$
\[
\sum_{b \in \mathcal{S}(\ell^k, d)} \Kronecker{b}{\ell^k} = \ell^{k-1} \sum_{b \in (\bZ / \ell \bZ)} \Kronecker{b}{\ell}^k= \begin{cases} (\ell-1) \ell^{k-1} & k \text{ even} \\ 0 & k \text{ odd}.
\end{cases}
\]
We conclude that
\[
\A(\ell^k, d) = \begin{cases} 1 & k = 0\\
        1- \ell^{-1} & k > 0 \text{ even} \\
        0 & k \text{ odd} 
    \end{cases}
\]
Hence 
\[
\sum_k \A(\ell^k, d)(\ell^k)^{-s} = \sum_{j \ge 0} \ell^{-2j} - \ell \sum_{j \ge 1} \ell^{-2j} = \zeta(2s) - \ell^{-1-2s}\zeta(2s) = \zeta(2s)(1- \ell^{-1-2s})
\]
as desired.\\

{\bf Case $\ell \nmid 2d$:}
Again we evaluate
\[
\A(\ell^k, d) = \frac{\phi(d^2)}{\phi(\ell^k d^2)} \sum_{b \in \mathcal{S}(\ell^k, d)} \Kronecker{b}{\ell^k}
\]
In this case, for $k \ge 1$ the first term is $\frac{1}{\phi(\ell^k)} = (\ell-1)^{-1}\ell^{1-k}$ by multiplicativity of $\phi$.  To calculate the sum, observe that $\mathcal{S}(\ell^k, d) = \{ b \in \bZ / 4\ell^k \bZ \mid b\equiv 1 \pmod 4, b \not \equiv d^{-2} \pmod \ell\}$, and therefore
\[
\sum_{b \in \mathcal{S}(\ell^k, d)} \Kronecker{b}{\ell^k} = \ell^{k -1} \sum_{\substack{b \in (\bZ / \ell \bZ) \\ b \not \equiv d^{-2} \pmod \ell}} \Kronecker{b}{\ell}^k = \begin{cases}
    (\ell-2) \ell^{k-1} & k \text{ even}\\
    -\ell^{k-1} & k \text{ odd}.
\end{cases}
\]
We obtain
\[
\A(\ell^k, d) = \begin{cases} 1 & k = 0\\
\frac{\ell-2}{\ell-1}  & k > 0 \text{ even} \\
\frac{-1}{\ell-1} & k \text{ odd}
\end{cases}
\]
and hence
\begin{equation*}
\begin{split}
\sum_k \A(\ell^k, d)(\ell^k)^{-s} & = \frac{1}{\ell-1} +  \sum_{j \ge 0} \left( \frac{\ell-2}{\ell-1} (\ell^{2j})^{-s} + \frac{-1}{\ell-1} (\ell^{2j+1})^{-s} \right) \\ 
&= 
\frac{1}{\ell-1} + (1- \ell^{-2s})^{-1} \left (\frac{\ell-2} {\ell-1} - \frac{1}{\ell-1} \ell^{-s} \right) \\
&= ( 1 -\ell^{-2s})^{-1} \left(1 - \frac{1}{\ell-1} \right) (\ell^{-s} + \ell^{-2s})
\end{split}
\end{equation*}
as desired. Now, the formal Dirichlet series given by 
\[
\sum_{n} \A(n, d) n^{-s}
= (1-2^{-s}) \zeta(2s) \prod_{\ell \nmid d \text{ odd}} \left( 1 - \left(\frac{1}{\ell-1}\right) ( \ell^{-s} + \ell^{-2s}) \right) \prod_{\ell \mid d} (1- \ell^{-2s-1})
\]
converges absolutely for $s > 1/2$, as the factor $\zeta(2s)$ converges for $s > 1/2$ and 
\[\prod_{\ell \nmid d \text{ odd}} \left( 1 - \left(\frac{1}{\ell-1}\right) (\ell^{-s} + \ell^{-2s} \right) \] 
converges for $s > 0$.

\end{proof}







\section{Proof via the analytic class number formula}

We now give our proof, which is similar to the approaches of Nagoshi \cite{Nagoshi} for discriminant $1-4p$ and Friedlander-Iwaniec \cite{FI hyperbolic} for discriminant $-4p$.    We estimate the sum of class numbers via the analytic class number formula.\\

By Corollary~\ref{difference with Hur}, instead of estimating $\sum_{p \le X} H(1-4p)$, we can estimate \[Q(X) := \sum_{p \le X} \Hur(1-4p),\] as the difference will be absorbed into the error term.  
Using the analytic class number formula in the form Lemma~\ref{Hurwitz and Dirichlet L}, and interchanging the order of summation, we can write
\begin{equation}
\begin{split}
    Q(X) &= \sum_{p \le X} \sum_{d^2 \mid 1-4p} \frac{h((1-4p)/d^2)}{w((1-4p)/d^2)} = \sum_{p \le X} \sum_{d^2 \mid 1-4p} \frac{1}{\pi} L(1, \chi_{(1-4p)/d^2}) \left|\frac{4p-1}{d^2}\right|^{1/2} \\
    & = \sum_{d \ge 1 \text{ odd}} Q_d(X) 
\end{split}
\end{equation}
where
\begin{equation}
\begin{split}
Q_d(X)
:= \sum_{\substack{p \le X \\ d^2 \mid 1-4p}}\frac{1}{\pi} L(1, \chi_{(1-4p)/d^2}) \left|\frac{1-4p}{d^2}\right|^{1/2} \\
= \frac{1}{\pi d}\sum_{\substack{p \le X \\ d^2 \mid 1-4p}} \sum_{n \ge 1} \frac{(4p-1)^{1/2}}{n} \Kronecker{(1-4p)/d^2}{n}.
\end{split}
\end{equation}

It will be convenient to us to first bound the related quantity 

\[
T_d(X) = \sum_{\substack{p \le  X\\ d^2 \mid 1-4p}} L(1, \chi_{(1-4p)/d^2}) = \sum_{\substack{p \le  X\\ d^2 \mid 1-4p}} \sum_{n \ge 1} \Kronecker{(1-4p)/d^2}{n} \frac{1}{n}.
\]

We define a multiplicative function $c(d)$, which will feature in the leading terms. For $d$ odd we set
\begin{equation}\label{define c}
   c(d) :=  \frac{1}{d^3} \prod_{\ell \mid d} \frac{\ell^3 -1 }{\ell^3 - \ell^2 - \ell - 1}
\end{equation}
and for $d$ even we set $c(d) = 0$. \\

We can now state the main results to be proved.

\begin{lemma} \label{estimate $T_d$}
If $d < (\log X)^{\alpha}$ for some fixed $\alpha > 0$ and $X \ge 2$, then

 \[T_d(X) = \frac{\pi^2}{12} \cdot d c(d) \prod_{\ell \text{ odd}}\frac{\ell^3 - \ell^2 - \ell - 1}{\ell^3 - \ell^2} \Li(X) + O_{\alpha, A}\left(\frac{X}{(\log X)^A}\right),\]
 for  any real $A\ge 1$.
\end{lemma}

\begin{lemma} \label{estimate $Q_d$}
If $d < (\log X)^{\alpha}$ for some fixed $\alpha > 0$ and $X \ge 2$, then
 \[ Q_d(X) = \frac{\pi}{6} c(d) \prod_{\ell \text{ odd}}\frac{\ell^3 - \ell^2 - \ell - 1}{\ell^3 - \ell^2} \int_2^{X} \frac{t^{1/2}}{\log t} dt + O_{\alpha, A} \left(\frac{X^{3/2}}{(\log X)^A}\right)\]
for any real $A\ge 1$.
\end{lemma}

These two lemmas allow us to obtain expressions with an arbitrary power-log saving in the error term. This is in part due to the Siegel--Walfisz theorem. From these lemmas we obtain our main result: 

\begin{theorem}\label{main result CNF}
\[
Q(X) = 
\frac{\pi}{3} C_{\operatorname{Art}} \int_2^X \frac{t^{1/2}}{\log t} dt + O_A\left(\frac{X^{3/2}}{(\log X)^A}\right)
\]
for any real number $A>1$. It follows that
\[
\sum_{p \le X} H(1-4p) = \frac{\pi}{3} C_{\operatorname{Art}} \int_2^X \frac{t^{1/2}}{\log t} dt + O_A\left(\frac{X^{3/2}}{(\log X)^A}\right).
\]
\end{theorem}

We note that Theorem \ref{MT} follows from Theorem \ref{main result CNF}, since 

\begin{align} \label{MTIBP} \int_2^X \frac{t^{1/2}}{\log t} dt & = \left[\frac{2}{3} \frac{t^{3/2}}{\log t} \right]_2^X - \frac{2}{3} \int_2^X \frac{t^{3/2}}{t (\log t)^2}  dt \\
& = \frac{2}{3} \cdot \frac{X^{3/2}}{ \log X} + O \left(\frac{X^{3/2}}{(\log X)^2} \right). \notag
\end{align}

Lemma~\ref{estimate $T_d$} is the key technical result, which we will prove in Section~\ref{$T_d$ details}.  We now assume it and prove the other results. 

\begin{proof}[Proof of Lemma~\ref{estimate $Q_d$}]

We apply partial summation to 

\begin{align*}
  Q_d(X) &= \sum_{\substack{p \le X \\ d^2 \mid 1-4p}}\frac{1}{\pi} L(1, \chi_{(1-4p)/d^2}) \left|\frac{4p-1}{d^2}\right|^{1/2} 
  \\ &= \frac{1}{\pi d}  \left((4X-1)^{1/2} T_d(X)   - \sqrt{7}T_d(2) - \int_{2}^{\infty} (t - \tfrac{1}{4})^{-1/2} T_d(t) dt \right)
  \\ &= \frac{1}{\pi d} \left(2 X^{1/2} T_d(X) - \int_{2}^X t^{-1/2} T_d(t) dt + O\left(\frac{X^{1/2}}{\log X}\right) \right).
\end{align*}
where at the last step we are using Lemma~\ref{estimate $T_d$} in the weaker form $T_d(X) = O_{\alpha, A} \left( \frac{X}{\log X} \right)$.  Note that the last error term is a power-saving over $X$, which is more than enough for our purpose. \\


Now we estimate the integral; for convenience we write $f(d) = \frac{\pi^2}{12} \cdot d c(d) \prod_{\ell \text{ odd}}\frac{\ell^3 - \ell^2 - \ell - 1}{\ell^3 - \ell^2}$, so that Lemma~\ref{estimate $T_d$}, with the variable changed to $t$, reads
\[
T_d(t) = f(d) \Li(t) + O_{\alpha, A}\left(\frac{t}{(\log t)^A}\right).
\]
Multiplying this by $t^{-1/2}$ and integrating, we get
\begin{align} \label{intd}
    \int_{2}^X t^{-1/2} T_d(t) dt & = f(d) \int_2^X t^{-1/2} \Li(t) dt + f(d) \int_{2}^X O_{\alpha, A} \left(\frac{t^{-1/2}}{(\log t)^A}\right)dt \\
    & = f(d)\int_2^X t^{-1/2} \Li(t) dt +O_{\alpha, A} \left(\frac{X^{3/2}}{(\log X)^A}\right), \notag
\end{align}
where the last error term comes from integrating an integrand of size $O_{\alpha, A} \left(\frac{X^{1/2}}{(\log X)^A}\right)$ over an integral of length $X$. The implied constant can be made independent of $d$, on the observation that $f(d)$ is absolutely bounded. \\

We now evaluate the integral 
\[\int_2^X t^{-1/2} \Li(t) dt\]
using integration by parts. We obtain 
\begin{align*} \int_2^X t^{-1/2} \Li(t) dt & = \left[2 t^{1/2} \Li(t) \right]_2^X - \int_2^X \frac{2 t^{1/2}}{\log t}dt \\
& = 2X^{1/2} \Li(X) - \int_2^X \frac{2t^{1/2} dt}{\log t} + O(1).
\end{align*}
Combining with (\ref{intd}) we obtain 
\begin{align} Q_d(X) & = \frac{2 f(d)}{\pi d} \int_2^X \frac{t^{1/2}}{\log t} dt + O_{\alpha, A} \left(\frac{X^{3/2}}{(\log X)^A} \right) \\
& = \frac{\pi}{6} \cdot c(d) \prod_{\ell \text{ odd}} \frac{\ell^3 - \ell^2 - \ell - 1}{\ell^3 - \ell^2} \int_2^X \frac{t^{1/2} dt}{\log t} + O_{\alpha, A} \left(\frac{X^{3/2}}{(\log X)^A} \right)\notag
\end{align}
as desired. 


\end{proof}

We now sum over $d$ to obtain the asymptotic for $Q$:

\begin{proof}[Proof of Theorem~\ref{main result CNF}]

Using $L(1, \chi_D) = O(\log D)$, we have the trivial bound

\[
Q_d(X) \ll \sum_{\substack{m \le  X\\ d^2 \mid 1-4m}} \log X \frac{\sqrt{X}}{d} \asymp d^{-3} X^{3/2} \log X.  
\]
We cut off the sum $\sum_{d \in [1, X] \text{ odd}} Q_d(X)$ at $d = (\log X) ^{\alpha}$ for some real constant $\alpha$ whose value we will determine later. 
\begin{equation}
\begin{split}
Q(X) &= \sum_{\substack{1 \leq d \leq X \\ d \text{ odd}}} Q_d(X)\\
&=  \sum_{\substack{1 \leq d < (\log X)^\alpha \\ d \text{ odd}}} Q_d(X) + \sum_{d \ge (\log X)^\alpha} Q_d(X)\\
&\le  \sum_{\substack{1 \leq d < (\log X)^{\alpha} \\ d \text{ odd}}} Q_d(X) +  \sum_{d \ge (\log X)^\alpha} O(d^{-3} X^{3/2} \log X)\\
&
= \sum_{\substack{1 \leq d < (\log X)^{\alpha} \\ d \text{ odd}}} Q_d(X) + O(X^{3/2} (\log X)^{1-2 \alpha})
\end{split}
\end{equation}

We now apply Lemma~\ref{estimate $Q_d$} to the cut off sum to obtain 
\begin{equation}
\label{apply estimate $Q_d$}
  \sum_{\substack{1 \leq d < (\log X)^\alpha \\d \text{ odd}}} Q_d(X) =  \prod_{\ell \text{ odd}}\frac{\ell^3 - \ell^2 - \ell - 1}{\ell^3 - \ell^2} \left( \sum_{1 \leq d < (\log X)^{\alpha}} c(d) \right) \int_{2}^X \frac{t^{1/2}}{\log t} dt + \sum_{1 \leq d < (\log X)^\alpha}  O_{\alpha, C} \left(\frac{X^{3/2}}{(\log X)^A}\right).
\end{equation}
where we are allowed to choose the real parameter $A > 1$ freely, and  we make the convention $c(d) = 0$ for $d$ even.  The error term here is of size size $\log(X)^{\alpha} \cdot O_{A, \alpha} \left(\frac{X^{3/2}}{(\log X)^A}\right) = O_{A, \alpha}(X^{3/2} \log{X}^{\alpha - A})$.  Choosing $A$ in terms of $\alpha$ large enough so that $\alpha - A \le 1- 2 \alpha$, we get that

\begin{equation}
\label{put together Q}
Q(X)  =    \prod_{\ell \text{ odd}}\frac{\ell^3 - \ell^2 - \ell - 1}{\ell^3 - \ell^2} \left( \sum_{1 \leq d < (\log X)^\alpha} c(d) \right) \int_{2}^X \frac{t^{1/2}}{\log t} dt  + O_\alpha(X^{3/2} \log{X}^{1 -2 \alpha})
\end{equation}

We now need to evaluate the  sum $\sum_{d = 1}^{\infty} c(d)$, which we can expand as an Euler product:
\begin{equation*}
\begin{split}
    \sum_{d = 1}^{\infty} c(d) & = \sum_{d \ge 1 \text{ odd}}  d^{-3} \prod_{\ell \mid d} \frac{\ell^3 -1 }{\ell^3 - \ell^2 -\ell -1} \\
    & = \prod_{\ell \text{ odd}} 1+ \ell^{-3}(1 - \ell^{-3})^{-1} \frac{\ell^3 -1 }{\ell^3 - \ell^2 -\ell -1} \\
    &= \prod_{\ell \text{ odd}} \frac{\ell^3 - \ell^2 - \ell}{\ell^3 - \ell^2 - \ell -1}
\end{split}
\end{equation*}

We write 
\[\sum_{d=1}^\infty c(d) = \sum_{1 \le d < (\log X)^\alpha} c(d) + \sum_{d \ge (\log X)^\alpha} c(d).\]
From (\ref{define c}) and an application of Mertens' theorem, we see that 
\begin{equation} \label{trivcbd} c(d) \ll \frac{\log \log d}{d^3}.
\end{equation}

It follows that 
\begin{align*} \sum_{1 \leq d < (\log X)^{\alpha}} c(d) & = \sum_{d \geq 1} c(d) + O \left(\sum_{d \ge (\log X)^\alpha} \frac{\log \log d}{d^3} \right) \\
& = \prod_{\ell \text{ odd}} \frac{\ell^3 - \ell^2 - \ell}{\ell^3 - \ell^2 - \ell - 1} + O_\alpha \left(\frac{\log \log \log X}{(\log X)^{2 \alpha}} \right) \\ 
&= \prod_{\ell \text{ odd}} \frac{\ell^3 - \ell^2 - \ell}{\ell^3 - \ell^2 - \ell - 1} + O_{\alpha} \left( (\log X)^{-2\alpha + 1} \right).
\end{align*}

 Plugging this into \eqref{apply estimate $Q_d$}, we obtain

\begin{equation*}
\begin{split}
Q(X) &= \frac{\pi}{6} \prod_{\ell \text{ odd}} \left( \frac{\ell^3 - \ell^2 - \ell}{\ell^3 - \ell^2 - \ell -1} \cdot \frac{\ell^3 - \ell^2 - \ell - 1}{\ell^3 - \ell^2} \right) \int_2^X \frac{t^{1/2}}{\log t} dt+ O_\alpha(X^{3/2} \log(X)^{-2 \alpha} )+ O_{\alpha}(X^{3/2} \log(X)^{1-2\alpha})\\
& =  \frac{\pi}{6} \prod_{\ell \text{ odd}} \left(1 - \frac{1}{\ell^2 - \ell} \right) \int_2^X \frac{t^{1/2}}{\log t} dt+ O_{\alpha}(X^{3/2} \log(X)^{1-2\alpha})\\
& = \frac{\pi}{3} C_{\text{Art}} \int_2^X \frac{t^{1/2}}{\log t} dt + O_{\alpha}(X^{3/2} \log(X)^{1-2\alpha}).
\end{split}
\end{equation*} 
Since we can take $\alpha$ to be arbitrarily large, this gives a bound of the desired form.\end{proof}

\subsection{Proof of Lemma~\ref{estimate $T_d$}}\label{$T_d$ details}

We now need to estimate \[
T_d(X) = \sum_{\substack{p \le  X\\ d^2 \mid 1-4p}} L(1, \chi_{(1-4p)/d^2)}) = \sum_{\substack{p \le  X\\ d^2 \mid 1-4p}} \sum_{n \ge 1} \Kronecker{(1-4p)/d^2}{n} \frac{1}{n}.
\]
under the assumption that $d < (\log X)^{\alpha}$.\\

First we want to cut off the tails of the inner sums, so that we have something finite.  Define
\[
T_d(X, [B_1, B_2]) := \sum_{\substack{p \le  X\\ d^2 \mid 1-4p}} \sum_{B_1\le n \le B_2} \Kronecker{(1-4p)/d^2}{n} \frac{1}{n}.
\]

We cut off our sum at some real number $B$, decomposing
\[
T_d(X) = T_d(X, [1, B]) + T_d(X, [B, \infty]).
\]  

The tail $T_d(X, [B, \infty])$ is easy to bound.  By equation (9) in \cite{Siegel}, for any $p \le X$ and $d$ a positive integer with $d^2 \mid 1-4p$, 
\begin{equation}\label{Siegel partial sum}
\sum_{n > B} \Kronecker{(1-4p)/d^2}{n} \frac{1}{n} = O(B^{-1} X^{1/2} \log X).
\end{equation}
Summing over all $p \le X$ with $d^2 \mid 1-4p$,
\begin{equation}\label{tail bound}
\begin{split}
T_d(X, [B,\infty]) &= \sum_{\substack{p \le  X\\ d^2 \mid 1-4p}} \sum_{n \ge B} \Kronecker{(1-4p)/d^2}{n} \frac{1}{n}   \\
= O(B^{-1} X^{3/2}),
\end{split}
\end{equation}
since we have $O(X/\log X)$ terms each of which is $O(B^{-1} X^{1/2} \log X)$ by \eqref{Siegel partial sum}.\\

We now choose a second cutoff $b < B$, which we'll choose small enough that we can estimate $T_d(X, b)$ by Siegel--Walfisz, and then will estimate by breaking up
\[
T_d(X, [1, B]) = T_d(X, [1, b]) + T_d(X, [b, B]).
\]

We will estimate $T_d(X, [1, b])$ by Siegel--Walfisz and $T_d(X, [b, B])$ using our unrestricted bilinear character sum bound.

\subsection{Estimating $T_d(X, [1, b])$}

We exchange order of summation, to get
\begin{equation}\label{change order}
T_d(X, [1, b]) = \sum_{1 \le n \le b} \frac{1}{n} \sum_{\substack{p \le  X\\ d^2 \mid 1-4p}}  \Kronecker{(1-4p)/d^2}{n} .
\end{equation}
We now estimate the inner sum.  Note that  $\Kronecker{(1-4p)/d^2}{n}$ depends only on $p$ modulo $4d^2 n = O(b (\log X)^{2 \alpha})$ as we are in the ranges $n \le b$ and $d \le (\log X)^{\alpha}$.  We apply Siegel--Walfisz in the form of Lemma \ref{S-W new} to get 
\begin{equation}\label{use siegel-walfisz}
\begin{split}
\sum_{\substack{p \le  X\\ d^2 \mid 1-4p}}   \Kronecker{(1-4p)/d^2}{n} =  \frac{1}{\phi(nd^2)} \sum_{\substack{a \in (\bZ/d^2 n\bZ)^\ast \\ d^2 \mid 1-4a} } \Kronecker{(1-4a)/d^2}{n} \Li(X) + O(d^2 n X (\log X)^{-A})\\ 
= (\phi(d^2)^{-1}) \A(n, d) \Li(X) + O(X b (\log X)^{2 \alpha -A})
\end{split}
\end{equation}
for any $A > 0$, where the arithmetic function $\A(n, d)$ was defined in \eqref{Define A(n,d)}.\\



Now plugging \eqref{use siegel-walfisz} into \eqref{change order}, we obtain
\begin{equation}\label{summed s-w}
    T_d(X,[1, b]) =   \phi(d^2)^{-1} \left(\sum_{1 \le n \le b} \frac{\A(n, d)}{n}\right) \Li(X) +  O\left( (b \log b) X (\log X)^{2\alpha -A}\right)
\end{equation}

We now use the the expression for $\sum_n \A(n, d) n^{-s}$ given in \eqref{Dirichlet}, and set $s =1$.  We deduce that the sum $\sum_{n \le b} \A(n, d) n^{-1}$  converges absolutely as $b \to \infty$, with sum 
\begin{equation} \label{andsum}
\begin{split}
\sum_{n} \frac{\A(n, d)}{n} = f_d(1) 
&= \frac{1}{2} \zeta(2) \prod_{\ell \nmid d \text{ odd}} \left( 1 - \left(\frac{1}{\ell-1}\right) ( \ell^{-1} + \ell^{-2}) \right) \prod_{\ell \mid d} (1- \ell^{-3})\\
&= \frac{1}{2} \zeta(2) \prod_{\ell \mid d} \left(\frac{\ell-1}{\ell}\right) \left(\frac{\ell^3-1}{\ell^3 - \ell^2 - \ell - 1} \right) \prod_{\ell \text{ odd}} \left( \frac{\ell^3 - \ell^2 - \ell - 1}{\ell^3 - \ell^2} \right) \\
&= \frac{1}{2} \zeta(2) d^3 c(d) \prod_{\ell \mid d} \left(\frac{\ell-1}{\ell}\right) \prod_{\ell \text{ odd}} \left( \frac{\ell^3 - \ell^2 - \ell - 1}{\ell^3 - \ell^2} \right),
\end{split}
\end{equation}
using the definition of $c(d)$ in \eqref{define c}.\\

To bound $\sum_{n >b} \A(n, d) n^{-1}$, we use
\[
\sum_{n > b} \frac{\A(n, d)}{n} < b^{-1/2 + \ep} \sum_{n > b} A(n, d) n^{-1/2 - \ep} = O_\ep(b^{-1/2 + \ep})
\]
for any $\ep > 0$ since the sum $\sum_{n \ge 1} A(n, d) n^{-1/2-\ep}$ converges absolutely. It follows that

\begin{equation}
\sum_{1 \le n \le b} \frac{\A(n, d)}{n} = \frac{1}{2} \zeta(2) d^3 c(d) \prod_{\ell \mid d} \left(\frac{\ell-1}{\ell}\right) \prod_{\ell \text{ odd}} \left( \frac{\ell^3 - \ell^2 - \ell - 1}{\ell^3 - \ell^2} \right) + O_\ep(b^{-1/2 + \ep})
\end{equation}

Substituting this into (\ref{summed s-w}), we obtain 
\begin{equation}\label{with explicit error term}
\begin{split}
    T_d(X, [1, b]) = & \phi(d^2)^{-1} \left(\frac{1}{2} \zeta(2) d^3 c(d) \prod_{\ell \mid d} \left(\frac{\ell-1}{\ell}\right) \prod_{\ell \text{ odd}}  \frac{\ell^3 - \ell^2 - \ell  - 1}{\ell^3 - \ell^2} \right) \Li(X) + O_\epsilon(b^{-1/2 + \ep} \Li(X))  \\
    &+ O_A \left(\frac{X (b \log b)}{(\log X)^{A - 2\alpha}} \right) \\
= & \frac{\pi^2}{12} d c(d)  \prod_{\ell \mid d} \left( \frac{\ell^3 - \ell}{\ell^3 - \ell^2- \ell - 1} \right) \Li(X) + O_{A, \ep} \left( \max \left(b^{-1/2 + \ep} \Li(X), \frac{X (b \log b)}{(\log X)^{A- 2\alpha}} \right) \right)
\end{split}
\end{equation}

\subsection{Estimating $T_d(X, [b, B])$}

We will now apply our unrestricted bilinear character sum bound (Lemma~\ref{unrestricted double oscillation}) to
\[
T_{d}(X, [b, B]) = \sum_{\substack{p \le  X\\ d^2 \mid 1-4p}} \sum_{b < n \le B} \Kronecker{(1-4p)/d^2}{n} \frac{1}{n}.
\]

Following Friedlander--Iwaniec in \cite{FI hyperbolic}, we divide the interval $[b, B]$ into dyadic intervals $[a, 2a]$ and apply Lemma~\ref{unrestricted double oscillation} on each.

\begin{lemma}\label{double cancellation interval}
For $a \ll X^{1/2}$,
\[
T_{d}(X, [a, 2a])
= \sum_{\substack{p \le  X\\ d^2 \mid 1-4p}} \sum_{a \le n \le 2a} \Kronecker{(1-4p)/d^2}{n} \frac{1}{n} \ll X (\log X) \cdot a^{-1/2}
\]
\end{lemma}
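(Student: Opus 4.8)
The plan is to bring the double sum into the shape covered by the unrestricted double oscillation lemma (Lemma~\ref{unrestricted double oscillation}), with the prime variable reindexed to play the role of the outer (``$m$'') variable and $n$ the inner one, after first removing the harmonic weight $1/n$ by partial summation. \emph{Reindexing.} Since $d$ is odd, the constraint $d^2 \mid 1-4p$ lets us write $(1-4p)/d^2 = -r$ with $r := (4p-1)/d^2$ a positive integer, and for $p \le X$ we have $r \le 4X/d^2 =: M$; distinct admissible primes $p$ give distinct such $r$, and conversely $r$ determines $p = (d^2 r + 1)/4$, so this is a bijection onto a set of $r \le M$. By complete multiplicativity of the Kronecker symbol in its top entry,
\[
\Legendre{(1-4p)/d^2}{n} = \Legendre{-r}{n} = \Legendre{-1}{n}\Legendre{r}{n}, \qquad \bigl|\Legendre{-1}{n}\bigr| \le 1 .
\]
Setting $\alpha_r := 1$ if $(d^2 r + 1)/4$ is a prime not exceeding $X$ and $\alpha_r := 0$ otherwise (so $|\alpha_r| \le 1$), we obtain
\[
T_{[a,2a]}(X) = \sum_{r \le M} \alpha_r \sum_{a \le n \le 2a} \frac{\Legendre{-1}{n}\,\Legendre{r}{n}}{n} .
\]

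\emph{Removing the weight.} For $n \le 2a$ one has $\tfrac1n = \tfrac{1}{2a} + \int_n^{2a} t^{-2}\,dt$. Writing $D_r(t) := \sum_{a \le n \le t}\Legendre{-1}{n}\Legendre{r}{n}$ and interchanging the summations with the integral (Fubini on $\{a \le n \le t \le 2a\}$) gives
\[
T_{[a,2a]}(X) = \frac{1}{2a}\sum_{r \le M}\alpha_r D_r(2a) \;+\; \int_a^{2a} \frac{1}{t^2}\Bigl(\sum_{r \le M}\alpha_r D_r(t)\Bigr)\,dt .
\]
Since $\int_a^{2a} t^{-2}\,dt \asymp a^{-1}$, it suffices to prove the uniform bound $\sum_{r \le M}\alpha_r D_r(t) \ll X (\log X)\, a^{1/2}$ for $t \in [a, 2a]$.

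\emph{Double oscillation.} Expanding $D_r(t)$ and swapping the two summations,
\[
\sum_{r \le M}\alpha_r D_r(t) = \sum_{n \le 2a} \beta_n \sum_{r \le M}\alpha_r \Legendre{r}{n}, \qquad \beta_n := \begin{cases}\Legendre{-1}{n}, & a \le n \le t,\\[2pt] 0,& \text{otherwise},\end{cases}
\]
which is a double sum of exactly the form in Lemma~\ref{unrestricted double oscillation}, with outer range $[1, M]$, inner range $[1, 2a]$, and both weight sequences bounded by $1$. Applying the lemma (its second minimand, with $N = 2a$, and $\log M + \log N \ll \log X$ since $M \le 4X$ and $a \ll X^{1/2}$) yields
\[
\sum_{r \le M}\alpha_r D_r(t) \ll M a (\log X)\bigl(a^{-1/2} + (a/M)^{1/2}\bigr) = (\log X)\bigl(M a^{1/2} + M^{1/2} a^{3/2}\bigr),
\]
uniformly in $t \in [a,2a]$. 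Now $M \le 4X$ gives $Ma^{1/2} \ll X a^{1/2}$, while $a \ll X^{1/2}$ gives $M^{1/2}a^{3/2} \ll X^{1/2}a \cdot a^{1/2} \ll X a^{1/2}$; hence $\sum_{r \le M}\alpha_r D_r(t) \ll X(\log X)a^{1/2}$, and by the previous paragraph $T_{[a,2a]}(X) \ll X(\log X)a^{-1/2}$, as claimed.

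\emph{The main obstacle.} The one point requiring care is the partial summation. It is not enough to bound $1/n \le 1/a$ and factor the weight out of the absolute value: feeding $\beta_n = \Legendre{-1}{n}/n$ directly into Lemma~\ref{unrestricted double oscillation} produces only $\ll M a^{1/2}\log X \asymp X a^{1/2}\log X$, which overshoots the target by a factor of $a$. Splitting $1/n$ through its elementary layer-cake identity is precisely what converts that length-$a$ estimate into the required saving of $a^{-1/2}$. The remaining ingredients — the Kronecker-symbol identity $\Legendre{-r}{n} = \Legendre{-1}{n}\Legendre{r}{n}$, the bijection $p \leftrightarrow r$, and the final numerical check, in which the hypothesis $a \ll X^{1/2}$ is used exactly once (to control $M^{1/2}a^{3/2}$) — are routine.
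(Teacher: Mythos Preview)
Your proof is correct, but the route you take is more elaborate than necessary, and the ``main obstacle'' paragraph misidentifies the issue.

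The paper's proof is a single line: write
\[
T_{[a,2a]}(X)=\frac{1}{a}\sum_{\substack{p\le X\\ d^2\mid 1-4p}}\sum_{a\le n\le 2a}\frac{a}{n}\,\Legendre{(1-4p)/d^2}{n},
\]
observe that $a/n\in[1/2,1]$ for $n\in[a,2a]$, absorb this factor into $\beta_n$, and apply Lemma~\ref{unrestricted double oscillation} directly. This yields exactly $\frac{1}{a}\cdot aX(\log X)(a^{-1/2}+(X/a)^{-1/2})\ll X(\log X)a^{-1/2}$. No partial summation is needed: the weight $1/n$ varies by at most a factor of $2$ on the dyadic interval, so it can be treated as a bounded coefficient after extracting the overall scale $1/a$.

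Your layer-cake / partial summation step is therefore unnecessary, and your claim that ``it is not enough to bound $1/n\le 1/a$ and factor the weight out'' is not quite right: what fails is pulling $1/n$ outside an \emph{absolute value}, but what succeeds (and what the paper does) is pulling out $1/a$ and leaving the bounded ratio $a/n$ inside as part of $\beta_n$. On the other hand, your careful reindexing $p\mapsto r=(4p-1)/d^2$ and the splitting $\Legendre{-r}{n}=\Legendre{-1}{n}\Legendre{r}{n}$ make the application of Lemma~\ref{unrestricted double oscillation} more honest than the paper's terse invocation, which glosses over the fact that the top entry $(1-4p)/d^2$ is negative and not literally of the form ``$m\le M$''.
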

\begin{proof}
We rewrite
\[
T_{d}(X, [a, 2a]) = \frac{1}{a} \sum_{\substack{p \le  X\\ d^2 \mid 1-4p}} \sum_{a \le n \le 2a} \frac{a}{n} \Kronecker{(1-4p)/d^2}{n} .
\]
so all the coefficients in the inner sum are $1$-bounded, and  Lemma~\ref{unrestricted double oscillation} applies to give
\[
T_{d}(X, [a, 2a]) \ll \frac{1}{a} \cdot a X (\log a + \log X) (a^{-1/2} + (X/a)^{-1/2}) \ll X \log X \cdot a^{-1/2},
\]
since $a \ll X^{1/2}$.
\end{proof}

We now apply this lemma to bound $T_{d}(X, [b, B])$ as follows:

\begin{equation}\label{intermediate range}
T_{d}(X, [b, B]) \le \sum_{i = 0}^{\lfloor \log_2(B/b) \rfloor -1} T_{[2^i b, 2^{i+1}b]}(X) \ll  X \log X \cdot b^{-1/2} \sum_{i = 0}^{\lfloor \log_2(B/b) \rfloor -1} 2^{-i/2} \ll X \log X \cdot  b^{-1/2}.
\end{equation}

Since we have chosen $b = (\log X)^{\beta}$ for $\beta > 4$, this is an error term. 
\subsection{ Proof of Lemma~\ref{estimate $T_d$}}
We are now ready to put all of our above estimates together.
\begin{proof}[Proof of Lemma~\ref{estimate $T_d$}]

Summing \eqref{with explicit error term}, \eqref{intermediate range}, and \eqref{tail bound}
\begin{equation}
\begin{split}
    T_d(X) = & T_d(X, [1, b])  + T_d(X, [b, B]) + T_d(X, [B, \infty] \\
    = & \frac{\pi^2}{12} d c(d)  \prod_{\ell \mid d} \left( \frac{\ell^3 - \ell}{\ell^3 - \ell^2- \ell - 1} \right) \Li(X)+ O_{A, \epsilon} \left( \max \left(b^{-1/2 + \epsilon} \Li(X), \frac{X (b \log b)}{(\log X)^{A- 2\alpha}} \right) \right)\\ &+ O(X \log X \cdot  b^{-1/2})
    + O(B^{-1} X^{3/2}).
\end{split}
\end{equation}

Setting $\ep = 1/4$,  $b = (\log X)^{4C}$, $A =   2 \alpha + 5C+1$, and $B = X^{1/2} \log(X)^C$, for a positive real parameter $C$, our error term takes the form 
\[
O_{C} \left(\max \left\{ \frac{X}{(\log X)^{C+1}} , \frac{X \log(X)^{4C} \log \log X}{\log(X)^{5C+1}}, \frac{X}{ \log(X)^{2C - 1}}, \frac{X}{(\log X)^{C}} \right\}\right) 
\]
which is $O_C(X (\log X)^{-C})$ for $C > 1$. 
\end{proof}

\end{document}